\DeclareMathOperator{\dup}{d\hspace{-1.5pt}}
\newcommand{\comment}[1]{}
\newcommand{\raisecomma}{\raisebox{2pt}{$,$}}
\newcommand{\raisedot}{\raisebox{2pt}{$.$}}
\newcommand{\tm}[1]{{\times 10^{#1}}}
\newcommand{\That}{{\widehat{T}}}
\newcommand{\Rhat}{{\widehat{R}}}
\newcommand{\Ttilde}{{\widetilde{T}}}
\newcommand{\Rtilde}{{\widetilde{R}}}
\newcommand{\half}{{\textstyle\frac12}}
\newcommand{\C}{{\mathbb C}}
\newcommand{\R}{{\mathbb R}}
\newcommand{\N}{{\mathbb N}}
\newcommand{\Nstar}{{\N^{*}}}
\newcommand{\Z}{{\mathbb Z}}
\newcommand{\Hplane}{{\mathcal H}}
\newcommand{\Hplanestar}{{\Hplane^{*}}}
\newcommand{\vth}{\vartheta}
\newcommand{\logGamma}{\ln\Gamma}
\newcommand{\Tableck}{{$1$}}	
\newcommand{\Tableabcd}{{$2$}}	
\newtheorem{theorem}{Theorem}
\newtheorem{corollary}{Corollary}
\newtheorem{lemma}{Lemma}
\newtheorem{remark}{Remark}
\begin{document}
\bibliographystyle{plain}
\title{~\\[-60pt]
On
asymptotic approximations to the log-Gamma and
Riemann-Siegel theta functions
}
\author{Richard P.\ Brent\\[5pt]
Australian National University\\
Canberra, ACT 2600,
Australia\\[5pt]
and CARMA\\
University of Newcastle\\
Callaghan, NSW 2308, Australia\\
}

\date{\today\\[20pt]
In memory of Jonathan Borwein 1951--2016} 

\maketitle
\thispagestyle{empty}                   

\begin{abstract}

We give bounds on the error in the asymptotic approximation of the
log-Gamma function $\ln\Gamma(z)$
for complex $z$ in the right half-plane. These improve on earlier bounds
by Behnke and Sommer (1962),  Spira (1971), and Hare (1997).
We show that 
$|R_{k+1}(z)/T_k(z)| < \sqrt{\pi k}$ 
for nonzero $z$ in the right half-plane, where $T_k(z)$ is
the $k$-th term in the asymptotic series, and $R_{k+1}(z)$ is the error
incurred in truncating the series after $k$ terms.
If $k \le |z|$, then the stronger bound
$|R_{k+1}(z)/T_k(z)| < (k/|z|)^2/(\pi^2-1)
< 0.113$
holds. Similarly for the asymptotic approximation
of $\ln\Gamma(z+\frac{1}{2})$, except that a factor
$\eta_k = 1/(1-2^{1-2k})$ multiplies some of the bounds.

We deduce similar bounds for asymptotic approximation of the
Riemann-Siegel theta function $\vartheta(t)$. We show that the 
accuracy of a well-known approximation to $\vartheta(t)$ can be improved by
including an 
\hbox{exponentially} small term in the approximation.
This improves the \hbox{attainable} accuracy for real $t>0$
from $O(\exp(-\pi t))$ to $O(\exp(-2\pi t))$.
We discuss a similar example due to Olver (1964), and
a connection with the Stokes phenomenon.
\end{abstract}

\pagebreak[4]

\section{Introduction}			\label{sec:intro}

The \emph{Riemann-Siegel theta function} $\vth(t)$,
which occurs in the theory of the 
Riemann zeta function~\cite[\S6.5]{Edwards},
is defined for real $t$ by
\begin{equation}		\label{eq:RS-theta-1}
\vth(t) :=
\arg\Gamma\!\left(\frac{it}{2}+\frac{1}{4}\right) -
  \frac{t}{2}\log\pi.
\end{equation}
The argument is defined so that $\vth(t)$ is continuous
on $\R$, and $\vth(0) = 0$.
Clearly $\vth(t)$ is an odd function,
i.e. $\vth(-t) = -\vth(t)$ for all real~$t$,
so there is no essential loss of generality in
assuming that $t$ is positive.

The significance of $\vth(t)$ is the fact that
$
Z(t) := \exp(i\vth(t))\,\zeta(\half+it)
$ 
is a real-valued
function.  Thus, zeros of $\zeta(s)$ on the critical line
$\Re(s) = \half$ can be detected
by sign changes of $Z(t)$.
In a sense, $\vth(t)$ encodes half the information contained in
$\zeta(\half+it)$ (albeit the less interesting half), while
$Z(t)$ encodes the other half.

The motivation for this paper was an attempt to give a straight-forward
proof for the well-known asymptotic expansion
\begin{equation}		\label{eq:old2.3_corrected}
\vth(t) \sim \frac{t}{2} \log\left(\frac{t}{2\pi e}\right)
 - \frac{\pi}{8}
 + \sum_{j=1}^{\infty}\frac{(1-2^{1-2j})\,|B_{2j}|}{4j(2j-1)\,t^{2j-1}}
 \,\raisecomma
\end{equation}
and to obtain a rigorous bound on the error incurred in truncating the
sum after $k$ terms.  A bound 
\begin{equation}		\label{eq:faulty_bound}
\frac{(2k)!}{(2\pi)^{2k+2}\,t^{2k+1}} + \exp(-\pi t)
\end{equation}
was stated
in~\cite[following eqn.~(2.3)]{rpb047}, but no proof was given, and in
fact the bound is incorrect.\footnote{We have taken into account a
typographical error in eqn.~(2.3), where $B_{2k}$ should be replaced
by $|B_{2k}|$, as previously noted in~\cite[footnote on pg.~682]{rpb070}.
}
For example, with $k=3$ and $t=9.5$, the
error exceeds the bound by a factor of $1.011$.

To obtain a satisfactory error bound to replace~\eqref{eq:faulty_bound}
we needed an error bound
for Stirling's asymptotic
approximation~\cite[(6.1.40)]{AS} 
to $\logGamma(z)$ on the
imaginary axis $\Re(z)=0$. We found several such bounds in the literature,
but they were not entirely satisfactory for our purposes
(see Remarks~$\ref{remark:Spira}$--$\ref{remark:Stieltjes}$).
Hence, Theorems~\ref{thm:right_half_bd}--\ref{thm:iterative}
and Corollary~\ref{cor:cor1} give new
error bounds on Stirling's approximation. These bounds are valid in the
right half-plane ($\Re(z) \ge 0$, $z\ne 0$), and improve on
previous bounds  when $z$ is
on or sufficiently close to the imaginary axis.

Stirling's approximation leads, via the duplication formula for the Gamma
function, to an asymptotic expansion
\[
\logGamma(z+\half) \sim z\log z - z + \half\log(2\pi) +
  \sum_{j=1}^{\infty}\frac{B_{2j}(\frac12)}{2j(2j-1)\,z^{2j-1}}
\]
that goes back to Gauss \cite[Eqn.\ {[}59{]} of Art.\ 29]{Gauss-v3}.
It is the special case $a=\half$ of an expansion for
$\logGamma(z+a)$ that was considered, for $a\in[0,1]$ and real positive $z$,
by Hermite~\cite{Hermite}.
See also Askey and Roy~\cite[5.11.8]{AR}, and Nemes~\cite[(1.6)]{Nemes13}.
Using our bounds on the error in Stirling's approximation to
$\logGamma(z)$, we deduce bounds on the error in Gauss's 
approximation to $\logGamma(z+\half)$.
The bounds are almost the same as those for Stirling's approximation,
the only difference being that a factor $\eta_k = 1/(1-2^{1-2k})$
multiplies some of the bounds
(see Theorems~\ref{thm:right_half_bd_a}--\ref{thm:iterative2}
and Corollary~\ref{cor:cor1hat} in \S\ref{sec:Gamma2}).

These bounds, in the case
that $z=it$ ($t\in\R$), are what is needed to give bounds on the
approximation of $\vth(t)$.  
See Theorem~\ref{thm:RS-theta-approx_bds} and
Corollaries~\ref{cor:cor3}--\ref{cor:cor4} in \S\ref{sec:RS-theta} for
these bounds.
One such result (see \eqref{eq:bd_no_arctan} below) is a bound
\begin{equation}		\label{eq:old_bd_corrected}
\eta_k\,(\pi k)^{1/2}\,\Ttilde_k(t) + \half e^{-\pi t}
\end{equation}
on the error if
the sum in~\eqref{eq:old2.3_corrected} is truncated
after the $k$-th term $\Ttilde_k(t)$.

Perhaps surprisingly, we obtain a smaller bound
if an exponentially-small term
$\half\arctan(\exp({-\pi t}))$ is included in the approximation
of $\vth(t)$.
The term $\half \exp({-\pi t})$ in~\eqref{eq:old_bd_corrected} can 
then be omitted (see Theorem~\ref{thm:RS-theta-approx_bds}).
This is discussed in \S\S\ref{sec:RS-theta}--\ref{sec:accuracy}.
In \S\ref{sec:accuracy} we show that the
attainable error, if the terms in the asymptotic series are summed until
the smallest term is reached, is of order $\exp({-\pi t})$
if (as usual) the $\arctan$ term is omitted from the approximation,
but is reduced to $O(\exp({-2\pi t}))$ if the
$\arctan$ term is included.
This observation is to some extent implicit in the work of
Berry~\cite[\S4]{Berry95} and Gabcke~\cite[Satz~4.2.3]{Gabcke}, 
but our presentation
makes it explicit.\footnote{The fact that the error in
the Riemann-Siegel approximation to $Z(t)$ is of order $\exp(-\pi t)$
was observed empirically by the author in 1977, when writing a review
of~\cite{CR}. A detailed theoretical explanation was later given by
Berry~\cite{Berry95}.
}

\section[Asymptotic approximation of logGamma(z)]%
{Asymptotic approximation of $\logGamma(z)$} \label{sec:Gamma1}

A comment on notation: variables
$s, z \in\C$; 
$c, r, t, u, x, y, \varepsilon, \eta, \theta, \psi \in\R$;
and $j, k, m, n\in \Nstar$ (the positive integers).
``log'' denotes the
principal branch of the natural logarithm on the cut plane
$\C\backslash(-\infty,0]$.
The (closed) right half-plane is
$\Hplane := \{z\in\C: \Re(z)\ge 0\}$,
and $\Hplanestar := \Hplane\backslash\{0\}$.
We define constants $\eta_k$ for $k\in\Nstar$  by
$\eta_k := 1/(1-2^{1-2k})$.

The proper domain for the log-Gamma function $\logGamma$ is a Riemann
surface. However, for our purposes it is sufficient to take the
(principal branch of the) log-Gamma function to be an analytic function
on the cut-plane $\C\backslash(-\infty,0]$, such that
$\logGamma(x) = \log(\Gamma(x))$ is real for positive real $x$.\footnote{In
a computer implementation of $\logGamma(z)$,
care has to be taken because $\logGamma(z)$ and $\ln(\Gamma(z))$
may differ by a multiple of $2\pi i$.}

In this section we consider approximation of $\logGamma(z)$ for
$z\in\C\backslash(-\infty,0]$.
When computing $\Gamma(z)$ or $\logGamma(z)$, 
we can use the reflection formula
\[
\Gamma(z)\Gamma(-z) = -\frac{\pi}{z\sin(\pi z)}
\]
if $\Re(z) < 0$, $z\not \in \Z$. 
Thus, in the following we assume that $\Re(z) \ge 0$.

We recall Stirling's approximation,
taking $k-1$ terms in the asymptotic expansion with
a remainder $R_k$:
\begin{equation}		\label{eq:Stirling}
\logGamma(z) = (z-\half)\log z - z + \half\log(2\pi)
	+ \sum_{j=1}^{k-1} T_j(z) + R_k(z),
\end{equation}
where
\begin{equation}		\label{eq:T_k}
T_j(z) = \frac{B_{2j}}{2j(2j-1)z^{2j-1}}\,\raisecomma
\end{equation}
and $R_k(z)$ is a ``remainder'' or ``error'' term that may be written as
\begin{equation}		\label{eq:Stirling_R}
R_k(z) = 
\int_0^\infty \frac{B_{2k} - B_{2k}(\{u\})}
	{2k\,(u+z)^{2k}}\dup u.
\end{equation}
Here $\{u\} := u - \lfloor u\rfloor$
denotes the fractional part of $u$,
$B_{2k}(u)$ is a Bernoulli polynomial, and $B_{2k} = B_{2k}(0)$ is
a Bernoulli number, so $B_2 = \frac{1}{6}$,
$B_4 = -\frac{1}{30}$, etc.
See Olver~\cite[\S\S8.1, 8.4]{Olver} for the definitions and
a proof of~\eqref{eq:Stirling_R}.

A different representation of the remainder is often
convenient.  Using~\eqref{eq:Stirling_R} 
and $R_{k}(z) = T_k(z) + R_{k+1}(z)$, we see that the error after
taking $k$ terms (instead of $k-1$) in the sum is%
\footnote{We have followed
Olver's convention.
Other authors may include $k$ terms in the sum
in~\eqref{eq:Stirling}. Thus, their $R_k$
may correspond to our $R_{k+1}$, and care has to be taken when comparing
bounds in the literature. See, for example, 
Abramowitz and Stegun~\cite[(6.1.42)]{AS}.}
\begin{equation}		\label{eq:StirlingR2}
R_{k+1}(z) = -\int_0^\infty \frac{B_{2k}(\{u\})}
        {2k\,(u+z)^{2k}}\dup u.
\end{equation}

If $z$ is real and positive, 
then the asymptotic series~\eqref{eq:Stirling} is strictly
enveloping in the sense of P\'olya and Szeg\"o~\cite[Ch.~4]{PS-v1}, 
so $R_k(z)$ has the same sign as the first term omitted, which is $T_k(z)$.
Also, $R_k(z)$ is smaller in magnitude than this term,
i.e.~$|R_k(z)| < |T_k(z)|$ 
(in fact this inequality holds whenever $|\arg(z)| \le\pi/4$,
see Remark~\ref{remark:WW}).

In the case of complex $z$ in the right half-plane, 
the error $R_k(z)$ may be larger in absolute value
than the first omitted term. This case is covered by
Theorem~\ref{thm:right_half_bd} and Corollary~\ref{cor:cor1},
which improve on earlier results
by Spira~\cite{Spira} and Hare~\cite[Prop.~4.1]{Hare}.

\begin{theorem}			\label{thm:right_half_bd}
If $z\in\Hplanestar$,
$R_k(z)$ is defined by
eqn.~\eqref{eq:Stirling}, 
and $T_j(z)$ by~\eqref{eq:T_k},
then
\begin{equation}		\label{eq:right_bd1}
|R_{k+1}(z)| \le
	\frac{\pi^{1/2}\,\Gamma(k+\half)}{\Gamma(k)}\; |T_k(z)|
\end{equation}
and
\begin{equation}		\label{eq:right_bd2}
|R_{k}(z)| \le
	\left(\frac{\pi^{1/2}\,\Gamma(k+\half)}{\Gamma(k)} + 1\right)
	|T_k(z)|.
\end{equation}
\end{theorem}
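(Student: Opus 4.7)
The plan is to work directly from the integral representation \eqref{eq:StirlingR2} of $R_{k+1}(z)$ and bound its absolute value by controlling numerator and denominator separately. The right half-plane hypothesis $\Re(z)\ge 0$ enters only through the denominator, while the numerator is handled by a uniform bound on $B_{2k}(\{u\})$ coming from its Fourier series.

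First I would invoke the Fourier expansion of $B_{2k}(\{u\})$, whose coefficients all have the same sign; this yields the uniform pointwise bound $|B_{2k}(\{u\})|\le B_{2k}(0)=|B_{2k}|$ for all real $u$, with equality at integers. For the denominator, writing $z=x+iy$ with $x\ge 0$ and expanding $|u+z|^2 = u^2 + 2ux + |z|^2$ immediately gives $|u+z|^2 \ge u^2+|z|^2$ whenever $u\ge 0$. Substituting these two estimates into \eqref{eq:StirlingR2} reduces the problem to
$$|R_{k+1}(z)| \le \frac{|B_{2k}|}{2k}\int_0^\infty\frac{\dup u}{(u^2+|z|^2)^k}.$$

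Next I would evaluate the remaining integral by the substitution $u=|z|\tan\phi$, which turns it into $|z|^{1-2k}\int_0^{\pi/2}\cos^{2k-2}\phi\,\dup\phi$, a standard Beta integral equal to $\sqrt{\pi}\,\Gamma(k-\half)/(2\Gamma(k))$. Rewriting $\Gamma(k-\half)=2\Gamma(k+\half)/(2k-1)$ and recognising from \eqref{eq:T_k} that $|T_k(z)|=|B_{2k}|/(2k(2k-1)|z|^{2k-1})$, the common factor $1/(2k(2k-1)|z|^{2k-1})$ cancels and \eqref{eq:right_bd1} falls out.

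The second bound \eqref{eq:right_bd2} then follows immediately from \eqref{eq:right_bd1} together with the decomposition $R_k(z)=T_k(z)+R_{k+1}(z)$ and the triangle inequality. I do not anticipate any real obstacle: the only subtle step is the Fourier estimate $|B_{2k}(\{u\})|\le|B_{2k}|$, which is classical, and the inequality $|u+z|^2\ge u^2+|z|^2$ is saturated precisely on the imaginary axis, so one should not expect a materially sharper bound from this method when $z$ lies close to $i\R$—which is exactly the regime where Theorem~\ref{thm:right_half_bd} is supposed to improve on earlier bounds.
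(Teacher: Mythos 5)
Your proposal is correct and follows essentially the same route as the paper's proof: the same pointwise bound $|B_{2k}(\{u\})|\le|B_{2k}|$, the same denominator estimate $|u+z|^2\ge u^2+|z|^2$ using $\Re(z)\ge 0$, the same tangent substitution evaluated via Wallis's formula, and the same triangle-inequality step for \eqref{eq:right_bd2}. The only cosmetic difference is that you make the Fourier-series justification of $|B_{2k}(\{u\})|\le|B_{2k}|$ explicit, where the paper leaves it implicit.
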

\begin{proof}
Let $x = \Re(z)$ and $y = \Im(z)$. From~\eqref{eq:StirlingR2}, we have
\begin{equation}		\label{eq:integrals}
|R_{k+1}(z)|
 = \left|\int_0^\infty\frac{B_{2k}(\{u\})}{2k(u+z)^{2k}}\dup u\right|
 \le \frac{|B_{2k}|}{2k}\int_0^\infty|u+z|^{-2k}\dup u.
\end{equation}
Since $x\ge 0$, inside the integral we have that
\[
|u+z|^2 = (u+x)^2+y^2 \ge u^2+x^2+y^2 = u^2+|z|^2.
\]
Making a change of variables $u \mapsto |z|\tan\psi$, this gives
\begin{align*}
\int_0^\infty|u+z|^{-2k}\dup u
 \le& \int_0^\infty(u^2+|z|^2)^{-k}\dup u\\
 =&\; |z|^{1-2k}\int_0^{\pi/2}\cos^{2k-2}\psi\dup\psi\\
 =&\; \frac{\pi^{1/2}}{2}\,\frac{\Gamma(k-\half)}{\Gamma(k)}\,|z|^{1-2k},
\end{align*}
where the closed form for the integral is known as ``Wallis's formula'',
see for example~\cite[(6.1.49)]{AS}.  Thus, the inequality
\eqref{eq:right_bd1} follows from~\eqref{eq:integrals}.

The inequality~\eqref{eq:right_bd2}
follows easily from~\eqref{eq:right_bd1} and the triangle inequality
\begin{equation}		\label{eq:convert_bd}
|R_k(z)| = |T_k(z) + R_{k+1}(z)| \le |T_k(z)| + |R_{k+1}(z)|.
\end{equation}
\end{proof}
During a computation, we may wish to bound the error term as a multiple of
either the last term included in the approximating sum, or as a multiple
of the first term omitted.  Hence, the following corollary of
Theorem~\ref{thm:right_half_bd} is useful.
\begin{corollary}		\label{cor:cor1}
If $z\in\Hplanestar$ and $R_k(z)$ is defined by
eqn.~\eqref{eq:Stirling}, then
\begin{equation}		\label{eq:corbd1}
\left|\frac{R_{k+1}(z)}{T_k(z)}\right| < \sqrt{\pi k}
\end{equation}
and
\begin{equation}		\label{eq:corbd2}	
\left|\frac{R_{k}(z)}{T_k(z)}\right| < 1+ \sqrt{\pi k}.
\end{equation}
\end{corollary}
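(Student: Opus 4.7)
The plan is to reduce the corollary directly to Theorem~\ref{thm:right_half_bd} by replacing the Gamma-ratio $\pi^{1/2}\Gamma(k+\half)/\Gamma(k)$ that appears there with the cleaner upper bound $\sqrt{\pi k}$. In other words, it suffices to establish the strict inequality
\[
\frac{\Gamma(k+\half)}{\Gamma(k)} < k^{1/2} \qquad \text{for all } k\in\Nstar,
\]
after which \eqref{eq:corbd1} follows from \eqref{eq:right_bd1} and \eqref{eq:corbd2} from \eqref{eq:right_bd2}, both without any further work.

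To prove the auxiliary inequality, I would invoke the strict logarithmic convexity of $\Gamma$ on $(0,\infty)$ (the Bohr--Mollerup theorem). Writing $k+\half = \half (k+1) + \half k$, strict log-convexity gives
\[
\logGamma(k+\half) < \half \logGamma(k+1) + \half \logGamma(k)
 = \half \log k + \logGamma(k),
\]
and exponentiating yields $\Gamma(k+\half) < k^{1/2}\,\Gamma(k)$, which is exactly what is needed. (Alternatively, one can cite Wendel's inequality $\Gamma(x+s) \le x^s\Gamma(x)$ for $x>0$, $0\le s\le 1$, with strict inequality when $0<s<1$, which is the same statement at $s=\half$.)

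The main obstacle is really only bookkeeping: convincing oneself that the strict inequality above propagates correctly to \eqref{eq:corbd1}--\eqref{eq:corbd2}, given that Theorem~\ref{thm:right_half_bd} only gives a non-strict bound. Since $\Gamma(k+\half)/\Gamma(k) < k^{1/2}$ strictly (the inequality is strict for every $k\in\Nstar$, as log-convexity is strict), combining with \eqref{eq:right_bd1} yields
\[
\left|\frac{R_{k+1}(z)}{T_k(z)}\right| \le \frac{\pi^{1/2}\Gamma(k+\half)}{\Gamma(k)} < \sqrt{\pi k},
\]
and analogously for \eqref{eq:corbd2} using \eqref{eq:right_bd2}. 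This completes the proof sketch; no additional complex-analytic or integral estimates beyond those already established for Theorem~\ref{thm:right_half_bd} are needed.
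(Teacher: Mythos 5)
Your proposal is correct, and its overall structure matches the paper's: both reduce the corollary to Theorem~\ref{thm:right_half_bd} by establishing the auxiliary inequality \(\Gamma(k+\half)/\Gamma(k) < k^{1/2}\) and then substituting into \eqref{eq:right_bd1} and \eqref{eq:right_bd2}. The only divergence is in how that auxiliary inequality is proved. The paper cites an enveloping asymptotic series for \(\logGamma(x+\half)-\logGamma(x)-\half\log x \sim -\frac{1}{8x}+\cdots\) from an earlier paper of the author, whose strict enveloping property for positive real \(x\) gives \(\log(\Gamma(x+\half)/\Gamma(x)) < \half\log x\). You instead use strict logarithmic convexity of \(\Gamma\) on \((0,\infty)\), writing \(k+\half = \half k + \half(k+1)\) to get \(\Gamma(k+\half) < \Gamma(k)^{1/2}\Gamma(k+1)^{1/2} = k^{1/2}\Gamma(k)\); this is equally valid, arguably more elementary, and self-contained (no external citation needed), though it yields the inequality only at the points needed rather than the slightly sharper asymptotic information the enveloping series provides. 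Your bookkeeping point is also handled correctly: the strictness of the Gamma-ratio inequality is what converts the non-strict bounds of Theorem~\ref{thm:right_half_bd} into the strict inequalities \eqref{eq:corbd1} and \eqref{eq:corbd2}.
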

\begin{proof}
{From}~\cite[eqn.~(21)]{rpb267},
\[
\logGamma(x+\half)-\logGamma(x) - \half\log(x)
 \sim - \frac{1}{8x} + \cdots,
\]
where the asymptotic series on the right
is strictly enveloping for positive real~$x$.
Thus, we have
$\log(\Gamma(x+\half)/\Gamma(x)) < \half\log x$, which implies
that $\Gamma(k+\half)/\Gamma(k) < \sqrt{k}$.
The inequality~\eqref{eq:corbd1} now follows from \eqref{eq:right_bd1} of
Theorem~\ref{thm:right_half_bd} and the
definition of $T_k(z)$.
The inequality~\eqref{eq:corbd2} follows similarly, from~\eqref{eq:right_bd2}
of Theorem~\ref{thm:right_half_bd}, or directly from~\eqref{eq:convert_bd}.
\end{proof}
\begin{remark}				\label{remark:convert_bd}
{\rm
The device of converting a bound on $R_{k+1}(z)$ into a bound on
$R_k(z)$, of the same order in $|z|$, via the
triangle inequality~\eqref{eq:convert_bd}, also applies to the bounds
given in \S\S\ref{sec:Gamma2}--\ref{sec:RS-theta} below. For the sake
of brevity we do not always give such bounds explicitly.
}
\end{remark}

In Remarks~\ref{remark:Spira}--\ref{remark:Stieltjes}
we comment briefly on some
related bounds that may be found in the literature, allowing
for different notations. Here and elsewhere, 
we define $\theta = \theta(z) := \arg z$.
\begin{remark}				\label{remark:Spira}
{\rm
Spira~\cite[eqn.~(4)]{Spira} obtains a bound of the same form
as our~\eqref{eq:right_bd1}, but larger by a factor
of approximately $4\sqrt{k/\pi}$. This is primarily
because he uses a rather crude upper bound on
the relevant integral instead of using 
Wallis's formula.\footnote{We note that the proof given by Spira
\cite[top of page 319]{Spira} is incomplete~-- he only proves a bound 
of the form $c(k)/|\Im(z)|^{2k-1}$, not the claimed $c(k)/|z|^{2k-1}$.}
}
\end{remark}
\begin{remark}				\label{remark:Hare}
{\rm
Hare~\cite[Prop.~4.1]{Hare}
obtains a bound of the form $c(k)/|\Im(z)|^{2k-1}$, assuming that
$\Im(z) \ne 0$, but without the assumption
that $\Re(z) \ge 0$.
Here $c(k) = 4\pi^{1/2}\Gamma(k+\half)/\Gamma(k) \sim 4\sqrt{\pi k}$.
When both bounds are applicable, 
our bound~\eqref{eq:right_bd2} is better
by a factor of about $4/|\sin\theta|^{2k-1}$ (for large~$k$).
A problem with a bound involving $|\Im(z)|$ rather than $|z|$ is that
the bound can not be reduced by applying the recurrence
$\Gamma(z+1) = z\Gamma(z)$.
}
\end{remark}
\begin{remark}				\label{remark:BS}
{\rm
In Behnke and Sommer~\cite[(18) on pg.~304]{BS} we find a bound that (in our
notation) is
\begin{equation}			\label{eq:BS}
\left|\frac{R_{k+1}(z)}{T_{k+1}(z)}\right| <
 1 + \frac{2k+1}{2}\sqrt{\frac{\pi}{k}}\,\raisecomma
\end{equation}
valid for $k \ge 1$ and $\Re(z) \ge 0$, $z \ne 0$.
It is interesting to note that this predates the bounds of
Spira~\cite{Spira} and Hare~\cite{Hare}.
To compare with our bounds, make a change of
variables $k \mapsto k+1$ in~\eqref{eq:corbd2} to obtain
\begin{equation}			\label{eq:corbd2plus}
\left|\frac{R_{k+1}(z)}{T_{k+1}(z)}\right| <
 1 + \sqrt{\pi(k+1)}.
\end{equation}
Since $k+1 < (k+\half)^2/k$, our bound~\eqref{eq:corbd2plus}
is always smaller than Behnke and Sommer's bound~\eqref{eq:BS},
although the ratio tends to $1$ as $k \to \infty$.  Note that our
bound~\eqref{eq:corbd2} gives a valid bound $1+\sqrt{\pi}$ on 
$|R_1(z)/T_1(z)|$, whereas~\eqref{eq:BS} requires $k \ge 1$ 
as the right-hand side is undefined if $k=0$.
}
\end{remark}
\begin{remark}				\label{remark:WW}
{\rm
A bound due to
Whittaker and Watson~\cite[pg.~252]{WW} (see also \cite[(6.1.42)]{AS}),
valid for $\Re(z) > 0$, is:
\begin{equation}			\label{eq:WWbound}
|R_k(z)| \le K(z)\,|T_k(z)|,
\end{equation}
where $K(z) = \sup_{u \ge 0} |z^2/(u^2+z^2)|$.
It is easy to see that $K(z)$ depends only on $\theta(z)$.
A geometric argument shows that
\[
K(z) = \begin{cases}
	 1 \text{ if } |\theta|  \le \pi/4;\\
	 \displaystyle\frac{1}{|\sin(2\theta)|} 
	  \text { if } |\theta| \in (\pi/4, \pi/2).
	\end{cases}
\]
Thus, the bound~\eqref{eq:WWbound} is preferable to those mentioned
in Remarks~\ref{remark:Spira}--\ref{remark:BS} 
(and to our bound~\eqref{eq:corbd2})
if $|\theta| \le \pi/4$, but it becomes poor as $|\theta|$ approaches
$\pi/2$.
}
\end{remark}
\begin{remark}				\label{remark:Stieltjes}
{\rm
A bound due to Stieltjes (see Olver~\cite[(8.4.06)]{Olver}) is
\begin{equation}		\label{eq:Olver84}
|R_k(z)| \le |T_k(z)|\sec^{2k}(\theta/2),
\end{equation}
where $|\theta| < \pi$.
This differs from our bound~\eqref{eq:right_bd2}
by a factor of approximately $\sec^{2k}(\theta/2)/\sqrt{\pi k}$.
If $\theta \approx \pi/2$ this factor is approximately $2^k/\sqrt{\pi k}$,
which is greater than~$1$ for all $k\ge 1$.
Thus, \eqref{eq:Olver84} is better
than our bound only if $|\theta|$ is sufficiently small. However, if
$|\theta| \le \pi/4$ we should prefer the bound~\eqref{eq:WWbound}.
}
\end{remark}

It is natural to ask if an upper bound of order $k^{1/2}$
for $|R_{k+1}(z)/T_k(z)|$, as in Corollary~\ref{cor:cor1}, is the
best possible.  Certainly, when $|\arg(z)| \le \pi/4$, or when
$|T_k(z)|$ is much larger than $|T_{k+1}(z)|$, the bound is not optimal.
However, without imposing conditions on $k$ and/or $z$, the bounds of 
Corollary~\ref{cor:cor1} are the best possible, up to constant factors.
We sketch a proof of this. Let $n$ be a sufficiently large positive integer, 
and $z = iy$, where $y = n/\pi$.  Thus, $n$ is close to the index
of the minimal term $|T_j(z)|$.  Also, there is no cancellation in
the sum $T_1(z) + T_2(z) + \cdots + T_n(z)$, since, using~\eqref{eq:T_k},
\[
i\,T_j(iy) = \frac{i\,(-1)^{j-1}|B_{2j}|}{2j(2j-1)\,(iy)^{2j-1}}
	 = \frac{|B_{2j}|}{2j(2j-1)\,y^{2j-1}} 
\]
is real and positive.
Using Stirling's approximation to estimate $T_j(z)$ and $T_n(z)$, 
we have
\[
\left|\frac{T_j(z)}{T_n(z)}\right| = 
 1 + O\left(\frac{\delta^2}{y}\right),
\]
if $j = n-\delta$ and $\delta^2 \le y$.
Thus, we can choose a positive integer
$\delta$ of order $y^{1/2}$ so that
$1/2 \le |T_j(z)/T_n(z)| \le 2$ 
for $n-\delta \le j \le n$. 
Hence $|T_{n-\delta}(z)+\cdots+T_{n-1}(z)| \ge 
 \delta\,|T_n(z)|/2$. 
For some $k$ in the interval $[n-\delta,n]$, we
must have $|R_{k+1}(z)/T_n(z)| \ge \delta/4$, so
$|R_{k+1}(z)/T_k(z)| \ge \delta/8$ is of order~$y^{1/2}$.

Numerical evidence confirms this conclusion.
Taking $n=100$, $y=n/\pi$, 
and $k=90$,
we find that $|R_{k+1}(iy)/T_k(iy)| \approx 4.62$.
If $n=400$, $y=n/\pi$, $k=383$, then $|R_{k+1}(iy)/T_k(iy)| \approx 10.15$. 
Thus, it appears that the constant $\sqrt{\pi}$ appearing
in Corollary~\ref{cor:cor1} can not be reduced by a factor greater
than~$4$ when $z$ lies on, or sufficiently close to,%
\footnote{The proof that we have outlined can be modified to cover
a region of the form $\Re(z) \ge 0$, $|\Im(z)| \ge c\Re(z)^2$, 
where $c$ is a sufficiently large positive constant.  On the other hand, by
Whittaker and Watson's bound~\eqref{eq:WWbound},
it can not be extended into the
sector $|\theta| < \pi/2 - \varepsilon$ ($|z|$ sufficiently large),
since in that region $|R_k(z)/T_k(z)|$ and $|R_{k+1}(z)/T_k(z)|$ are
$O(1/\varepsilon)$.
} the imaginary axis.

In Theorem~\ref{thm:iterative}, we obtain bounds that are better
than the bounds given in Theorem~\ref{thm:right_half_bd}
and Corollary~\ref{cor:cor1},
provided the condition $k \le |z|$ is satisfied.
If $|z|$ is too small, we can apply the recurrence 
$\logGamma(z) = \logGamma(z+1) - \log z$
as often as necessary and then apply Theorem~\ref{thm:iterative}.

Before stating Theorem~\ref{thm:iterative}, we define some constants $c_k$
which enter into the proof of the theorem.
Assuming that $T_k(z)$ is defined by~\eqref{eq:T_k}, let
\[
c_k := 
 \sum_{j=1}^{2k} \left|\frac{T_{k+j}(k)}{T_k(k)}\right|
	+ \sqrt{3k\pi}\left|\frac{T_{3k}(k)}{T_k(k)}\right|.
\]
The following lemma is the reason for introducing the constants $c_k$.
\begin{lemma}			\label{lem:ck}
If $z\in\Hplanestar$, $R_k(z)$ is defined by
eqn.~\eqref{eq:Stirling}, and $k \le |z|$, then
\begin{equation}                \label{eq:corbd3aa}
\left|\frac{R_{k+1}(z)}{T_k(z)}\right| \le c_k\,({k}/{|z|})^2.
\end{equation} 
\end{lemma}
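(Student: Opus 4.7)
The plan is to split the remainder $R_{k+1}(z)$ into a ``head'' of $2k$ explicit terms of the asymptotic series plus a new ``tail'' remainder $R_{3k+1}(z)$, namely
\[
R_{k+1}(z) \;=\; \sum_{j=1}^{2k} T_{k+j}(z) \;+\; R_{3k+1}(z),
\]
which follows by iterating the identity $R_m(z)=T_m(z)+R_{m+1}(z)$ that is already used in \eqref{eq:convert_bd}. The hope is that the constants $c_k$ are precisely engineered so that, after dividing through by $|T_k(z)|$, each of the $2k+1$ pieces is bounded by its counterpart at $z=k$ times at least the factor $(k/|z|)^2$.

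The key computational step is the exact ratio
\[
\frac{T_{k+j}(z)}{T_k(z)} \;=\; \frac{B_{2k+2j}}{B_{2k}}\cdot\frac{2k(2k-1)}{(2k+2j)(2k+2j-1)}\cdot z^{-2j},
\]
read off from \eqref{eq:T_k}. In particular, this ratio depends on $z$ only through the factor $z^{-2j}$, so
\[
\left|\frac{T_{k+j}(z)}{T_k(z)}\right| \;=\; \left|\frac{T_{k+j}(k)}{T_k(k)}\right|\left(\frac{k}{|z|}\right)^{2j}.
\]
Under the hypothesis $k\le|z|$ we have $k/|z|\le 1$, so $(k/|z|)^{2j}\le (k/|z|)^2$ for every $j\ge 1$. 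Summing over $1\le j\le 2k$ and applying the triangle inequality yields
\[
\left|\sum_{j=1}^{2k} T_{k+j}(z)\right| \;\le\; \Bigl(\sum_{j=1}^{2k}\left|T_{k+j}(k)/T_k(k)\right|\Bigr)\,(k/|z|)^2\,|T_k(z)|,
\]
which produces the first summand of $c_k$.

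For the tail, I would invoke Corollary~\ref{cor:cor1} with $k$ replaced by $3k$: since $z\in\Hplanestar$,
\[
|R_{3k+1}(z)| \;<\; \sqrt{3\pi k}\,|T_{3k}(z)|.
\]
The identity above (with $j=2k$) gives $|T_{3k}(z)/T_k(z)| = |T_{3k}(k)/T_k(k)|\,(k/|z|)^{4k}$, and $(k/|z|)^{4k}\le (k/|z|)^2$ holds because $k\ge 1$ and $k/|z|\le 1$. Combining, $|R_{3k+1}(z)|\le \sqrt{3\pi k}\,|T_{3k}(k)/T_k(k)|\,(k/|z|)^2\,|T_k(z)|$, which is exactly the second summand of $c_k$ times $(k/|z|)^2|T_k(z)|$. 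Adding the two estimates gives \eqref{eq:corbd3aa}.

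There is no real obstacle here beyond correct bookkeeping: once the ratio formula is written out, the hypothesis $k\le|z|$ automatically absorbs every power $(k/|z|)^{2j}$ with $j\ge 1$ into a single factor $(k/|z|)^2$, which is why the definition of $c_k$ matches the bound. The mildly delicate point is choosing the length $2k$ of the head so that the residual index $3k$ is available for Corollary~\ref{cor:cor1}; any sufficiently large fixed multiple of $k$ would work, but $3k$ keeps the constant $c_k$ small enough for the numerical claim $c_k/(\pi^2-1)<0.113$ advertised in the introduction.
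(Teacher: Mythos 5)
Your proposal is correct and follows essentially the same route as the paper: the same decomposition $R_{k+1}=\sum_{j=1}^{2k}T_{k+j}+R_{3k+1}$, the same application of Corollary~\ref{cor:cor1} at index $3k$ for the tail, and the same observation that the ratios $|T_{k+j}(z)/T_k(z)|$ are maximised at $|z|=k$ with at least a factor $(k/|z|)^2$ to spare. The only cosmetic difference is that you normalise directly by $T_k(z)$, whereas the paper first divides by $T_{k+1}(z)$ and then converts via $|T_{k+1}(z)/T_k(z)|=(k/|z|)^2\,|T_{k+1}(k)/T_k(k)|$; the resulting estimates are identical.
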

\begin{proof}
For all $m\in\N$,
\begin{equation}		\label{eq:Rkm1}
R_{k+1}(z) = \sum_{j=1}^m T_{k+j}(z) + R_{k+m+1}(z).
\end{equation}
Now
\[
|R_{k+m+1}(z)| \le \sqrt{(k+m)\pi}\,|T_{k+m}(z)|,
\]
by Corollary~\ref{cor:cor1} with $k$ replaced by $k+m$.
Taking norms in~\eqref{eq:Rkm1}, choosing $m=2k$, and dividing both
sides by $|T_{k+1}(z)|$, we obtain
\[
\left|\frac{R_{k+1}(z)}{T_{k+1}(z)}\right| \le
  \frac{1}{|T_{k+1}(z)|}\left(\sum_{j=1}^{2k} |T_{k+j}(z)| + 
    \sqrt{3k\pi}\,|T_{3k}(z)|\right).
\]
Since $|T_{k+j}(z)/T_{k+1}(z)|$ has the form $c/|z|^{2j-2}$,
it is a non-increasing function of $|z|$ (assuming $j \ge 1$), so its
maximum occurs when $|z|$ is minimal, i.e.~when $|z| = k$.
Thus
\[
\left|\frac{R_{k+1}(z)}{T_{k+1}(z)}\right| \le
  \frac{1}{|T_{k+1}(k)|}\left(\sum_{j=1}^{2k} |T_{k+j}(k)| +
    \sqrt{3k\pi}\,|T_{3k}(k)|\right)
 = c_k\left|\frac{T_k(k)}{T_{k+1}(k)}\right|.
\]
Since $T_{k+1}(z)/T_{k}(z)$ has the form $c/z^2$, we have
\[
\left|\frac{T_{k+1}(z)}{T_k(z)}\right| = (k/|z|)^2 
	\left|\frac{T_{k+1}(k)}{T_k(k)}\right|.
\]
Thus,~\eqref{eq:corbd3aa} follows.
\end{proof}

Numerical values of $c_k$ for various $k\le 50$ are given in
Table~{\Tableck}. The $c_k$ appear to increase monotonically
to the limit $1/(\pi^2-1) \approx 0.112745$.
We have verified monotonicity, and that $c_k < 1/(\pi^2-1)$,
for $k \le 100$.

\begin{table}[ht]
\begin{center}
\begin{tabular}{cc|cc|cc}
$k$  	& $c_{k}$ 	& $k$ 	& $c_{k}$ 	& $k$ 	& $c_{k}$\\
\hline
$1$	& $0.072096$	& $6$	& $0.107384$	& $15$	& $0.110498$\\
$2$	& $0.103961$	& $7$	& $0.108089$	& $20$	& $0.111050$\\
$3$	& $0.104294$	& $8$	& $0.108634$	& $25$	& $0.111384$\\
$4$	& $0.105304$	& $9$	& $0.109067$	& $30$	& $0.111609$\\
$5$	& $0.106460$	& $10$	& $0.109419$	& $50$	& $0.112060$\\
\hline
\end{tabular}
\caption{The constants $c_k$ (rounded up to $6$ decimals).}
\end{center}
\vspace*{-10pt}
\end{table}

\pagebreak[3]

\begin{theorem}			\label{thm:iterative}
If $z\in\Hplanestar$, $R_k(z)$ is defined by
eqn.~\eqref{eq:Stirling}, and $k \le |z|$, then
\begin{equation}                \label{eq:corbd3a}
\left|\frac{R_{k+1}(z)}{T_k(z)}\right|
 < \frac{(k/|z|)^2}{\pi^2-1}
 \le \frac{1}{\pi^2-1} < 0.113
\end{equation} 
and
\begin{equation}                \label{eq:corbd3b}
\left|\frac{R_{k}(z)}{T_k(z)}\right|
 < 1 + \frac{(k/|z|)^2}{\pi^2-1}
 \le \frac{\pi^2}{\pi^2-1} < 1.113.
\end{equation} 
\end{theorem}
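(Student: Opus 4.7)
The approach is to apply Lemma~\ref{lem:ck}, which already gives $|R_{k+1}(z)/T_k(z)| \le c_k(k/|z|)^2$ when $k \le |z|$. The first inequality in \eqref{eq:corbd3a} then reduces to $c_k < 1/(\pi^2-1)$; the middle inequality is the trivial $(k/|z|)^2 \le 1$, and the final numerical bound is direct. The companion inequality \eqref{eq:corbd3b} follows from the triangle inequality $|R_k(z)| \le |T_k(z)| + |R_{k+1}(z)|$ applied to \eqref{eq:corbd3a}, using $1 + 1/(\pi^2-1) = \pi^2/(\pi^2-1) < 1.113$.

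To establish $c_k < 1/(\pi^2-1)$, I would first convert each ratio into a more workable form using the classical formula $|B_{2m}| = 2(2m)!\,\zeta(2m)/(2\pi)^{2m}$, obtaining
\[
\left|\frac{T_{k+j}(k)}{T_k(k)}\right| = \frac{\zeta(2k+2j)}{\zeta(2k)\,\pi^{2j}}\prod_{i=1}^{2j}\left(1 + \frac{i-2}{2k}\right).
\]
For fixed $j$ both the zeta ratio and the product tend to $1$ as $k\to\infty$, so each term tends to $\pi^{-2j}$; one also checks that the weighted correction $\sqrt{3k\pi}\,|T_{3k}(k)/T_k(k)|$ decays geometrically in $k$. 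Summing, $c_k \to \sum_{j\ge 1}\pi^{-2j} = 1/(\pi^2-1)$, which matches the limit highlighted by Table~{\Tableck}.

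The main obstacle is forcing the inequality to be strict uniformly in $k$. My plan is a hybrid verification: first, check $c_k < 1/(\pi^2-1)$ and the monotonicity $c_k < c_{k+1}$ directly by high-precision numerical computation for $k$ up to a cutoff $K_0$ (the paper uses $K_0 = 100$); second, for $k > K_0$, majorize the displayed ratios using $\zeta(2k+2j)/\zeta(2k) < 1$ together with arithmetic--geometric--mean style estimates on $\prod_{i=1}^{2j}(1+(i-2)/(2k))$, then sum to obtain an analytic upper bound on $c_k$ that stays strictly below $1/(\pi^2-1)$. Calibrating $K_0$ so that the analytic estimates in the second stage actually close the residual gap is the delicate step; a closed-form proof of global monotonicity $c_k < c_{k+1}$ would give a conceptually cleaner alternative, since $c_k \nearrow 1/(\pi^2-1)$ immediately yields strict inequality for every $k$.
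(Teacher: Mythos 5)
Your reduction of the theorem to the single inequality $c_k < 1/(\pi^2-1)$, via Lemma~\ref{lem:ck} and the triangle inequality, is sound as far as it goes, but it is a genuinely different route from the paper's, and the comparison matters. The paper invokes Lemma~\ref{lem:ck} together with the numerically computed values of $c_k$ (Table~{\Tableck}) only for $1\le k\le 33$; for $k\ge 34$ it abandons $c_k$ entirely and argues directly for general $z$ with $|z|\ge k$, setting $\mu=(k/(\pi|z|))^2\le 1/\pi^2$ and $m=\lfloor k^{1/2}\rfloor$, splitting $|R_{k+1}(z)/T_k(z)|\le S+E$, and proving $S+E<\mu/(1-\mu)=k^2/(\pi^2|z|^2-k^2)\le (k/|z|)^2/(\pi^2-1)$. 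That direct argument is what yields the stronger conjectured bound \eqref{eq:conjectured_improvement} for $k\ge 34$; your route through $c_k\,(k/|z|)^2$ cannot recover it, as the paper itself notes in Remark~\ref{remark:conjectured_improvement}.

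The genuine gap is in your second stage, the analytic proof that $c_k<1/(\pi^2-1)$ for all $k>K_0$. Your stated plan --- majorize each ratio using $\zeta(2k+2j)/\zeta(2k)<1$ and upper estimates on $\prod_{i=1}^{2j}(1+(i-2)/(2k))$, then sum --- cannot close as described, because for every $j\ge 2$ that product strictly exceeds $1$ (for $j=2$ it equals $(1-1/(4k^2))(1+1/k)>1$), so each term with $j\ge 2$ exceeds $\pi^{-2j}$ up to an exponentially small zeta correction. Any term-by-term upper bound therefore sums to something strictly above $\sum_{j\ge 2}\pi^{-2j}$ over that range, and since the target $1/(\pi^2-1)$ is exactly the full geometric sum, the estimate can only succeed by exploiting the one term with a deficit, namely $j=1$, whose ratio is at most $\pi^{-2}(1-1/(2k))$. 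The entire proof hinges on showing that this deficit of order $\pi^{-2}/(2k)$ dominates the combined excess, roughly $\frac{e-1}{2k}\sum_{j\ge2}(2j-3)j\,\pi^{-2j}$, of the later terms plus the tail correction $\sqrt{3k\pi}\,|T_{3k}(k)/T_k(k)|$; this is precisely the computation the paper carries out (obtaining a residual margin of $\pi^2\mu/(44k)$ and the explicit threshold $k_0=34$), and it is absent from your proposal. Your fallback of proving global monotonicity $c_k<c_{k+1}$ is no easier: the paper only verifies monotonicity numerically for $k\le 100$ and does not claim a proof. Until you carry out the $j=1$--versus--rest comparison (or adopt the paper's direct large-$k$ argument), your proof establishes the theorem only for the finitely many $k$ covered by numerical verification.
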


\begin{proof}[Proof of Theorem~$\ref{thm:iterative}$.]
Let
\[
\mu := \left(\frac{k}{\pi|z|}\right)^2 \le \frac{1}{\pi^2}
\]
and $m := \lfloor k^{1/2}\rfloor$.
For brevity, we write $R_k$ for $R_k(z)$ and $T_k$ for $T_k(z)$.
Since $R_{k+1} = T_{k+1} + T_{k+2} + \cdots + T_{k+m} + R_{k+m+1}$,
we have $|R_{k+1}/T_k| \le S+E$, where
\[S := \sum_{j=1}^m \left|\frac{T_{k+j}}{T_k}\right|
 \;\text{ and }\;
  E := \left|\frac{R_{k+m+1}}{T_k}\right|.
\]
Since $|B_{2k}| = 2(2k)!\,\zeta(2k)/(2\pi)^{2k}$,
we have 
\[
\left|\frac{T_{k+j}}{T_k}\right| \le
 \frac{(2k+2j-2)!}{(2k-2)!}\,|2\pi z|^{-2j}.
\]
Using the assumption $k\le|z|$, it follows that
\begin{equation}			\label{eq:Tk_prod_bd}
\left|\frac{T_{k+j}}{T_k}\right| \le
 \mu^j \prod_{n=1}^{2j}\left(1+\frac{n-2}{2k}\right).
\end{equation}
Now $1+x \le \exp(x)$ for all $x\in\R$.  Thus
\[
\left|\frac{T_{k+j}}{T_k}\right| \le
 \mu^j \prod_{n=1}^{2j}\exp\left(\frac{n-2}{2k}\right)
 = \mu^j \exp\left(\frac{(2j-3)j}{2k}\right).
\]
By convexity, $1\le\exp(x)\le 1+(e-1)x$ for all $x\in [0,1]$.
It follows that, for $2 \le j \le m$, we have
\begin{equation}		\label{eq:Tkratj}
\left|\frac{T_{k+j}}{T_k}\right| \le
  \mu^j \left(1+(e-1)\frac{(2j-3)j}{2k}\right).
\end{equation}
Also, for the special case $j=1$, the inequality~\eqref{eq:Tk_prod_bd} gives
\begin{equation}		\label{eq:Tkrat1}
\left|\frac{T_{k+1}}{T_k}\right| \le
  \mu \left(1-\frac{1}{2k}\right).
\end{equation}
{From}~\eqref{eq:Tkratj}--\eqref{eq:Tkrat1},
\begin{align}
S &\le -\frac{\mu}{2k} + \sum_{j=1}^m \mu^j
   + \frac{e-1}{2k}\sum_{j=2}^m (2j-3)j\,\mu^j\nonumber\\
  &< -\frac{\mu}{2k} + \sum_{j=1}^\infty \mu^j
   + \frac{e-1}{2k}\sum_{j=2}^\infty (2j-3)j\,\mu^j\nonumber\\
  &= -\frac{\mu}{2k} + \frac{\mu}{1-\mu}
	+ \left(\frac{e-1}{2k}\right)
	   \frac{\mu^2(2+3\mu-\mu^2)}{(1-\mu)^3}\,\raisedot
							\label{eq:Sineq1}
\end{align}
Thus
\[
\frac{\mu}{1-\mu} - S \;>\; \frac{\mu}{2k}\left[1 -
    \frac{(e-1)\mu(2+3\mu-\mu^2)}{(1-\mu)^3}\right].
\]
Since $\mu(2+3\mu-\mu^2)/(1-\mu)^3 = \sum_{j=2}^\infty (2j-3)j\mu^{j-1}$ 
is monotonic increasing on $[0,1/\pi^2]$, the factor in square
brackets attains its minimum on $[0,1/\pi^2]$ at $\mu = 1/\pi^2$, and a 
numerical computation shows that the minimum is greater than $\pi^2/22$.
Thus,
\[
\frac{\mu}{1-\mu} - S > \frac{\pi^2\mu}{44k}\,\raisedot
\]

Now consider $E$. We have
\[
E = \left|\frac{R_{k+m+1}}{T_k}\right|
    = \left|\frac{T_{k+m}}{T_k}\right|\cdot
      \left|\frac{R_{k+m+1}}{T_{k+m}}\right|\,\raisedot
\]
The first factor on the right is at most $\mu^m e$, by~\eqref{eq:Tkratj}
with $j=m$;
the second factor is at most $\sqrt{\pi(k+m)}$, by an application
of Corollary~\ref{cor:cor1} with $k$ replaced by $k+m$.   This gives
\[
E \le \mu^m e\sqrt{\pi(k+m)}
  \le \mu^{\sqrt{k}-1} e\sqrt{2\pi k}.
\]
Thus  $kE/\mu \le \mu^{\sqrt{k}-2}e\sqrt{2\pi k^3} \ll 1/k$,
so there exists $k_0$ such that, for all $k \ge k_0$,
$kE/\mu < \pi^2/44$, so $E < \pi^2\mu/(44k)$ and $\mu/(1-\mu) > S+E$.
A computation shows that we can take 
$k_0 = 34$.
Thus, for all $k \ge k_0$,
\[
\left|\frac{R_{k+1}}{T_k}\right|
 < \frac{\mu}{1-\mu} 
 = \frac{k^2}{\pi^2|z|^2-k^2}			
 \le \frac{(k/|z|)^2}{\pi^2-1}\,\raisedot	
\]
This proves the desired inequality~\eqref{eq:corbd3a} for $k \ge k_0$.

By a straightforward numerical computation, we can verify
that~\eqref{eq:corbd3a} also
holds for $1 \le k \le 33$ (see Lemma~\ref{lem:ck} and Table \Tableck).
This concludes the proof of~\eqref{eq:corbd3a}.
Finally, \eqref{eq:corbd3b} follows from~\eqref{eq:corbd3a}
and the triangle inequality.
\end{proof}

\begin{remark}		\label{remark:conjectured_improvement}
{\rm
It is reasonable to conjecture the slightly stronger inequalities
\begin{equation}	\label{eq:conjectured_improvement}
\left|\frac{R_{k+1}(z)}{T_k(z)}\right|
 < \frac{k^2}{\pi^2|z|^2-k^2}, \;\;
\left|\frac{R_{k}(z)}{T_k(z)}\right|
 < \frac{\pi^2|z|^2}{\pi^2|z|^2-k^2},
\end{equation}
for all $(k, z)$ such that $|z| \ge k \ge 1$.
This has been verified numerically, and the proof
of Theorem~\ref{thm:iterative} shows that
\eqref{eq:conjectured_improvement} holds for $k \ge 34$.
However, our proof of~\eqref{eq:corbd3a} for $k \le 33$,
using Lemma~\ref{lem:ck} and the constants $c_k$,
is insufficient to prove~\eqref{eq:conjectured_improvement}.
}
\end{remark}

\section[Asymptotic approximation of logGamma(z+1/2)]%
{Asymptotic approximation of $\logGamma(z+\half)$}
							\label{sec:Gamma2}

In this section we deduce, from the results of \S\ref{sec:Gamma1},
an asymptotic series for $\logGamma(z+\half)$ 
in descending odd powers of $z$.  The series 
was given by Gauss \cite[Art.\ 29]{Gauss-v3};
by using the results of \S\ref{sec:Gamma1} we obtain new
error bounds for $z\in\Hplanestar$.

Replacing $z$ by $2z$ in~\eqref{eq:Stirling} and then
subtracting~\eqref{eq:Stirling} gives
\begin{equation}		\label{eq:G2G}
\logGamma(2z)-\logGamma(z)
= z\log z + (2\log 2 - 1)z - \half\log 2 +
\sum_{j=1}^{k-1} \That_j(z) + \Rhat_k(z),
\end{equation}
where $\That_j(z) = T_j(2z)-T_j(z)$ and $\Rhat_k(z) = R_k(2z)-R_k(z)$.
More explicitly, using~\cite[(8.1.12)]{Olver} for $B_{2j}(\half)$,
we have
\begin{equation}		\label{eq:That}
\That_j(z) = -(1-2^{1-2j})T_j(z) =
 -\frac{(1-2^{1-2j})B_{2j}}{2j(2j-1)z^{2j-1}}
= \frac{B_{2j}(\half)}{2j(2j-1)z^{2j-1}}\,\raisedot
\end{equation}
Also, $\Rhat_k(z) = \That_k(z) + \Rhat_{k+1}(z)$, where
\begin{equation}		\label{eq:Rhat}
\Rhat_{k+1}(z) = -\int_0^\infty \frac{2^{1-2k}B_{2k}(\{2u\}) - B_{2k}(\{u\})}
	{2k(u+z)^{2k}}\dup u.
\end{equation}

Using the duplication formula
$\Gamma(z+\half) = 2^{1-2z}\pi^{1/2}\Gamma(2z)/\Gamma(z)$,
eqn.~\eqref{eq:G2G} immediately gives Gauss's asymptotic expansion
of $\logGamma(z+\half)$:
\begin{equation}		\label{eq:G2G2}
\logGamma(z+\half) = z\log z - z + \half\log(2\pi) +
\sum_{j=1}^{k-1} \That_j(z) + \Rhat_k(z).
\end{equation}

The following lemma enables us to simplify the ``kernel'' function
appearing in the integral~\eqref{eq:Rhat}.
\begin{lemma}					\label{lem:BKhalf}
For $k \ge 1$ and all real $u$,
\[
2^{1-2k}B_{2k}(\{2u\}) - B_{2k}(\{u\}) = B_{2k}(\{u+\half\}).
\]
\end{lemma}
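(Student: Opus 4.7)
The plan is to reduce the identity to Raabe's classical multiplication formula for Bernoulli polynomials, namely
\[
B_n(2x) = 2^{n-1}\bigl[B_n(x) + B_n(x+\tfrac{1}{2})\bigr],
\]
which holds for every real $x$ and every $n\ge 0$. Taking $n=2k$ and dividing by $2^{2k-1}$ rearranges this to
\[
2^{1-2k}B_{2k}(2x) - B_{2k}(x) = B_{2k}(x+\tfrac12),
\]
which has exactly the shape of the claimed identity but without the fractional-part symbols.

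The only work, therefore, is to handle the fractional parts correctly. First I would observe that each of the three expressions
$B_{2k}(\{u\})$, $B_{2k}(\{2u\})$, and $B_{2k}(\{u+\tfrac12\})$ is periodic in $u$ with period $1$ (the middle one is in fact $\tfrac12$-periodic), so it suffices to verify the identity on the fundamental interval $u\in[0,1)$. I would then split this interval into two subcases.

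In Case~1, $u\in[0,\tfrac12)$, so $\{u\}=u$, $\{u+\tfrac12\}=u+\tfrac12$, and $\{2u\}=2u$. Substituting $x=u$ into Raabe's formula gives the required identity immediately. In Case~2, $u\in[\tfrac12,1)$, so $\{u\}=u$, $\{u+\tfrac12\}=u-\tfrac12$, and $\{2u\}=2u-1$. Substituting $x=u-\tfrac12$ into Raabe's formula yields
\[
2^{1-2k}B_{2k}(2u-1) - B_{2k}(u-\tfrac12) = B_{2k}(u),
\]
which, after rearrangement, is exactly the claimed identity in this range.

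There is no genuine obstacle here beyond the bookkeeping of fractional parts; the whole content of the lemma lies in Raabe's multiplication theorem. If one prefers to avoid the case split, an equivalent route is to note that both sides of the lemma are continuous (for $2k\ge 2$) $1$-periodic functions of $u$ that agree on $[0,\tfrac12]$ by Raabe with $x=u$, hence agree on $[-\tfrac12,\tfrac12]$ by parity considerations of $B_{2k}$, and therefore everywhere by periodicity; but the two-case verification above is the most transparent.
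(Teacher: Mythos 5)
Your proof is correct and rests on the same key identity as the paper's: the $m=2$ multiplication (Raabe) formula, which in the rearranged form $2^{1-2k}B_{2k}(2x)-B_{2k}(x)=B_{2k}(x+\tfrac12)$ is exactly the second identity the paper cites from Abramowitz and Stegun. The only (negligible) difference is in the fractional-part bookkeeping on $[\tfrac12,1)$: you substitute $x=u-\tfrac12$ directly, whereas the paper invokes the reflection formula $B_{2k}(u)=B_{2k}(1-u)$ to fold that case back onto the first.
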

\begin{proof}
This follows from the known identities~\cite[(23.1.8) and (23.1.10)]{AS}
\[B_{2k}(u) = B_{2k}(1-u)\]
and
\[
2^{1-2k}B_{2k}(2u) - B_{2k}(u) = B_{2k}(u+\half).
\]
\end{proof}
\noindent Using Lemma~\ref{lem:BKhalf}, we see from~\eqref{eq:Rhat} that
\begin{equation}		\label{eq:Rhatbd2}
\Rhat_{k+1}(z) = -\int_0^\infty\frac{B_{2k}(\{u+\half\})}
	{2k(u+z)^{2k}}\dup u.
\end{equation}

We can now prove an analogue of Theorem~\ref{thm:right_half_bd}.
The upper bound on $|\Rhat_k(z)|$ is the same as the bound
that we obtained for $|R_k(z)|$, but the bound on $|\Rhat_k(z)/\That_k(z)|$
is larger than the bound on $|R_k(z)/T_k(z)|$ by a factor
$\eta_k = 1/(1-2^{1-2k}) \le 2$.

\begin{theorem}			\label{thm:right_half_bd_a}
If $z\in\Hplanestar$
and $\Rhat_k(z)$ is defined by
eqn.~\eqref{eq:G2G2}, then
\begin{equation}		\label{eq:right_bd1a}
\left|\frac{\Rhat_{k+1}(z)}{\That_k(z)}\right| \le
	\eta_k\frac{\pi^{1/2}\Gamma(k+\half)}{\Gamma(k)}
	\,\raisedot
\end{equation}
\end{theorem}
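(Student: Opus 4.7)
The plan is to imitate the proof of Theorem~\ref{thm:right_half_bd} almost verbatim, starting from the representation~\eqref{eq:Rhatbd2} of $\Rhat_{k+1}(z)$ instead of~\eqref{eq:StirlingR2}, and to track the extra factor $1-2^{1-2k}$ that appears when one passes from $T_k(z)$ to $\That_k(z)$ via~\eqref{eq:That}. Taking absolute values inside the integral~\eqref{eq:Rhatbd2}, the kernel becomes $|B_{2k}(\{u+\half\})|$ rather than $|B_{2k}(\{u\})|$, so I need a uniform pointwise bound on $|B_{2k}(x)|$ for $x\in[0,1]$.

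The required fact is the classical inequality $|B_{2k}(x)|\le|B_{2k}|$ for $x\in[0,1]$ and $k\ge 1$, with the maximum attained at the endpoints $x=0,1$. This is due to Lehmer and is recorded in standard references such as Olver~\cite{Olver}; I would simply cite it rather than reprove it. Once this bound is in hand, the estimate of $\int_0^\infty |u+z|^{-2k}\dup u$ is identical to the one already carried out in the proof of Theorem~\ref{thm:right_half_bd}: since $\Re(z)\ge 0$ one has $|u+z|^2\ge u^2+|z|^2$, and then the substitution $u=|z|\tan\psi$ together with Wallis's formula yields
\[
|\Rhat_{k+1}(z)|\le \frac{|B_{2k}|\,\pi^{1/2}\,\Gamma(k-\half)}{4k\,\Gamma(k)}\,|z|^{1-2k}.
\]

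To finish, I would divide by $|\That_k(z)|=(1-2^{1-2k})|B_{2k}|/(2k(2k-1)|z|^{2k-1})$, which is precisely where the factor $\eta_k=1/(1-2^{1-2k})$ enters. Using $\Gamma(k+\half)=(k-\half)\Gamma(k-\half)$ to merge the constants reproduces the bound of Theorem~\ref{thm:right_half_bd} multiplied by~$\eta_k$, which is the claim. The only non-routine ingredient is Lehmer's bound on the Bernoulli polynomials; everything else is algebraic rescaling of the earlier argument, with $\eta_k$ emerging cleanly at the final division.
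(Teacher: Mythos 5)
Your proposal matches the paper's proof: the paper likewise runs the argument of Theorem~\ref{thm:right_half_bd} starting from~\eqref{eq:Rhatbd2}, with the factor $\eta_k=|T_k(z)/\That_k(z)|$ emerging exactly as you describe at the final division by $|\That_k(z)|$. Your only addition is to make explicit the bound $|B_{2k}(x)|\le|B_{2k}|$ on $[0,1]$, which the paper already uses implicitly in the step from~\eqref{eq:StirlingR2} to~\eqref{eq:integrals}, so this is the same argument spelled out in slightly more detail.
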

\begin{proof}
This is almost identical to the proof of
Theorem~\ref{thm:right_half_bd}, the only
difference being that we use~\eqref{eq:Rhatbd2} to bound
$\Rhat_{k+1}(z)$ instead of~\eqref{eq:StirlingR2} to bound $R_{k+1}(z)$.
This increases the bound by a factor
$\eta_k=|T_k(z)/\That_k(z)|$.
\end{proof}

\begin{corollary}		\label{cor:cor1hat}
Under the conditions of Theorem~$\ref{thm:right_half_bd_a}$, 
we have
\begin{equation}		\label{eq:corbd1a}
\left|\frac{\Rhat_{k+1}(z)}{\That_k(z)}\right| < \eta_k\sqrt{\pi k}.
\end{equation}
\end{corollary}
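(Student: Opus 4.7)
The plan is simply to combine Theorem~\ref{thm:right_half_bd_a} with the auxiliary inequality $\Gamma(k+\half)/\Gamma(k) < \sqrt{k}$ that was already established in the proof of Corollary~\ref{cor:cor1}. Theorem~\ref{thm:right_half_bd_a} gives
\[
\left|\frac{\Rhat_{k+1}(z)}{\That_k(z)}\right| \le
	\eta_k\,\frac{\pi^{1/2}\Gamma(k+\half)}{\Gamma(k)},
\]
so the only thing needed is to replace $\Gamma(k+\half)/\Gamma(k)$ by the upper bound $\sqrt{k}$.

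Recall that in the proof of Corollary~\ref{cor:cor1}, the bound $\Gamma(k+\half)/\Gamma(k) < \sqrt{k}$ was obtained from the strictly enveloping asymptotic expansion
\[
\logGamma(x+\half)-\logGamma(x) - \half\log(x) \sim -\frac{1}{8x} + \cdots,
\]
which implies $\log(\Gamma(x+\half)/\Gamma(x)) < \half\log x$ for positive real $x$, and hence (taking $x=k$) $\Gamma(k+\half)/\Gamma(k) < \sqrt{k}$. No additional work on the Gamma-quotient side is required; the inequality is reused verbatim.

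Substituting $\pi^{1/2}\Gamma(k+\half)/\Gamma(k) < \sqrt{\pi k}$ into the bound from Theorem~\ref{thm:right_half_bd_a} and noting that the inequality is strict yields
\[
\left|\frac{\Rhat_{k+1}(z)}{\That_k(z)}\right| < \eta_k\sqrt{\pi k},
\]
which is exactly~\eqref{eq:corbd1a}.

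There is no real obstacle: the proof is a one-line deduction from the previously proved theorem, mirroring exactly the way Corollary~\ref{cor:cor1} was deduced from Theorem~\ref{thm:right_half_bd}. The only bookkeeping issue is the extra factor $\eta_k$, which is already present in the statement of Theorem~\ref{thm:right_half_bd_a} and simply propagates unchanged.
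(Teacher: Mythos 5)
Your proof is correct and is exactly the route the paper intends: the paper omits an explicit proof of Corollary~\ref{cor:cor1hat} precisely because it follows from Theorem~\ref{thm:right_half_bd_a} by the same substitution $\Gamma(k+\half)/\Gamma(k)<\sqrt{k}$ used to deduce Corollary~\ref{cor:cor1} from Theorem~\ref{thm:right_half_bd}, with the factor $\eta_k$ carried along unchanged. Your observation that the strictness of $\Gamma(k+\half)/\Gamma(k)<\sqrt{k}$ converts the $\le$ of the theorem into the strict inequality of the corollary is the right bookkeeping.
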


\begin{remark}			\label{remark:k3z1}
{\rm
The factor $\eta_k$ in Corollary~\ref{cor:cor1hat} can be omitted
if $k \ge 3$ or $|z| \ge 1$.  A proof is given in an earlier version of this
paper.\footnote{See arXiv:1609.03682v1, proof of Corollary 3.}
}
\end{remark}

\begin{theorem}			\label{thm:iterative2}
If $z\in\Hplanestar$, $\Rhat_k(z)$ is defined by
eqn.~\eqref{eq:G2G2}, and $k \le |z|$, then
\begin{equation}                \label{eq:corbd3c}
\left|\frac{\Rhat_{k+1}(z)}{\That_k(z)}\right|
 < \eta_k\,\frac{(k/|z|)^2}{\pi^2-1}
\end{equation} 
and
\begin{equation}                \label{eq:corbd3d}
\left|\frac{\Rhat_{k}(z)}{\That_k(z)}\right|
 < 1 + \eta_k\,\frac{(k/|z|)^2}{\pi^2-1}\,\raisedot
\end{equation} 
\end{theorem}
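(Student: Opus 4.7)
The plan is to parallel the proof of Theorem~\ref{thm:iterative} with $\That_j, \Rhat_k$ in place of $T_j, R_k$, tracking the extra factor $\eta_k$ that arises from the relation $|\That_j| = |T_j|/\eta_j$ implied by~\eqref{eq:That}. Fix $\mu = (k/(\pi|z|))^2$ and $m = \lfloor k^{1/2} \rfloor$ as before, use the telescoping identity $\Rhat_{k+1} = \sum_{j=1}^m \That_{k+j} + \Rhat_{k+m+1}$, and split $|\Rhat_{k+1}/\That_k| \le \hat S + \hat E$ with
\[
\hat S = \sum_{j=1}^m |\That_{k+j}/\That_k|, \qquad
\hat E = |\Rhat_{k+m+1}/\That_k|.
\]

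The main observation is that the $\eta$-factors telescope cleanly, and is the only nontrivial step; everything else is bookkeeping. From $|\That_{k+j}/\That_k| = (\eta_k/\eta_{k+j})|T_{k+j}/T_k|$ and $\eta_{k+j} \ge 1$, one obtains $\hat S \le \eta_k S$, where $S$ is the sum bounded in the proof of Theorem~\ref{thm:iterative}. For the tail, applying Corollary~\ref{cor:cor1hat} with $k$ replaced by $k+m$ gives $|\Rhat_{k+m+1}/\That_{k+m}| < \eta_{k+m}\sqrt{\pi(k+m)}$, and multiplying by $|\That_{k+m}/\That_k| = (\eta_k/\eta_{k+m})|T_{k+m}/T_k|$ makes the $\eta_{k+m}$ cancel exactly, yielding $\hat E \le \eta_k E$ with $E$ as in the proof of Theorem~\ref{thm:iterative}. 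Hence $|\Rhat_{k+1}/\That_k| \le \eta_k (S+E)$.

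Since the proof of Theorem~\ref{thm:iterative} shows $S + E < \mu/(1-\mu) \le (k/|z|)^2/(\pi^2 - 1)$ whenever $k \ge 34$, the bound~\eqref{eq:corbd3c} follows in that range. For $1 \le k \le 33$ I would rerun the proof of Lemma~\ref{lem:ck} with hats throughout, using Corollary~\ref{cor:cor1hat} in place of Corollary~\ref{cor:cor1}; the same cancellation produces the hat-analogue $|\Rhat_{k+1}(z)/\That_k(z)| \le \chat_k (k/|z|)^2$ with $\chat_k \le \eta_k c_k$. Since $c_k < 1/(\pi^2 - 1)$ is already numerically verified in Table~{\Tableck}, this gives \eqref{eq:corbd3c} for $k \le 33$ as well. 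Finally, \eqref{eq:corbd3d} is immediate from \eqref{eq:corbd3c} via the triangle inequality $|\Rhat_k/\That_k| \le 1 + |\Rhat_{k+1}/\That_k|$, as in Remark~\ref{remark:convert_bd}.
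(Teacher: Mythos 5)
Your proposal is correct and is essentially the paper's own argument: the paper's proof of Theorem~\ref{thm:iterative2} simply says to repeat the proof of Theorem~\ref{thm:iterative} allowing for the extra factor $\eta_k$ from normalising by $\That_k(z)$ rather than $T_k(z)$, and your careful tracking of the telescoping $\eta$-factors (including the hatted version of Lemma~\ref{lem:ck} for $k\le 33$, which matches Remark~\ref{remark:eta_ck}) is exactly the detail that one-line proof leaves implicit.
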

\begin{proof}
This is the same as the proof of Theorem~\ref{thm:iterative},
except that we have to allow for the additional factor $\eta_k$ that
arises because the errors are normalised by $\That_k(z)$
instead of by $T_k(z)$.
\end{proof}

\begin{remark}			\label{remark:eta_ck}
{\rm
By a small modification of Lemma~\ref{lem:ck}, if $k \le |z|$ then
\[
|\Rtilde_{k+1}(z)/\Ttilde_k(z)| \le \eta_k c_k (k/|z|)^2.
\]
}
\end{remark}

\pagebreak[3]
\section{The Riemann-Siegel theta function} \label{sec:RS-theta}

In this section we
consider the Riemann-Siegel theta function $\vth(t)$
defined by~\eqref{eq:RS-theta-1}.
Lemma~\ref{lem:RS-theta-2} gives an equivalent expression for $\vth(t)$
that is better for our purposes than the 
definition.
\begin{lemma}			\label{lem:RS-theta-2}
For all $t\in\R$, 
\begin{equation}		\label{eq:RS-theta-2}
\vth(t) = \half\arg\Gamma\!\left(it+\half\right)
	        - \half t\log(2\pi)
		- \textstyle\frac{\pi}{8}
		+ \half\arctan\left(e^{-\pi t}\right).
\end{equation}
\end{lemma}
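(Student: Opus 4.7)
The plan is to derive (\ref{eq:RS-theta-2}) from the definition (\ref{eq:RS-theta-1}) by combining the Legendre duplication formula and the reflection formula for $\Gamma$, applied at the points $w = \tfrac14 + \tfrac{it}{2}$ and $w + \tfrac12 = \tfrac34 + \tfrac{it}{2}$. The duplication formula $\Gamma(2w) = \pi^{-1/2}\,2^{2w-1}\,\Gamma(w)\Gamma(w+\tfrac12)$ at $w = \tfrac14 + \tfrac{it}{2}$ gives
\[
\Gamma(\tfrac12 + it) = \frac{e^{it\log 2}}{\sqrt{2\pi}}\,\Gamma(\tfrac14+\tfrac{it}{2})\,\Gamma(\tfrac34+\tfrac{it}{2}),
\]
so, taking arguments,
\[
\arg\Gamma(\tfrac12+it) \;=\; t\log 2 + \arg\Gamma(\tfrac14+\tfrac{it}{2}) + \arg\Gamma(\tfrac34+\tfrac{it}{2}).
\]

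Next, I would apply the reflection formula $\Gamma(w)\Gamma(1-w) = \pi/\sin(\pi w)$ at the same $w$. Since $1-w = \tfrac34 - \tfrac{it}{2}$ and $\overline{\Gamma(\bar z)} = \Gamma(z)$ on the cut plane, we have $\Gamma(\tfrac34-\tfrac{it}{2}) = \overline{\Gamma(\tfrac34+\tfrac{it}{2})}$ for real $t$, and hence
\[
\arg\Gamma(\tfrac14+\tfrac{it}{2}) - \arg\Gamma(\tfrac34+\tfrac{it}{2}) \;=\; -\arg\sin\!\bigl(\tfrac{\pi}{4}+\tfrac{i\pi t}{2}\bigr).
\]
A direct expansion yields $\sin(\tfrac{\pi}{4}+\tfrac{i\pi t}{2}) = \tfrac{1}{\sqrt 2}\bigl(\cosh\tfrac{\pi t}{2} + i\sinh\tfrac{\pi t}{2}\bigr)$, whose argument is $\arctan\tanh(\pi t/2)$. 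Writing $\tanh(\pi t/2) = (1-e^{-\pi t})/(1+e^{-\pi t})$ and using the elementary identity $\arctan\tfrac{1-u}{1+u} = \tfrac{\pi}{4} - \arctan u$ (valid for $u>-1$), this argument equals $\tfrac{\pi}{4} - \arctan(e^{-\pi t})$.

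Combining the two displayed identities, I can eliminate $\arg\Gamma(\tfrac34+\tfrac{it}{2})$ and solve for $\arg\Gamma(\tfrac14+\tfrac{it}{2})$, obtaining
\[
\arg\Gamma(\tfrac14+\tfrac{it}{2}) \;=\; \tfrac12\arg\Gamma(\tfrac12+it) - \tfrac{t}{2}\log 2 - \tfrac{\pi}{8} + \tfrac12\arctan(e^{-\pi t}).
\]
Subtracting $\tfrac{t}{2}\log\pi$ from both sides and applying the definition (\ref{eq:RS-theta-1}) gives (\ref{eq:RS-theta-2}).

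The only subtle point, and the main obstacle, is the choice of branches when passing to arguments: the derivation as written determines each side modulo $2\pi$. I would resolve this by verifying the identity at $t=0$, where $\arg\Gamma(\tfrac14)=0$, $\arg\Gamma(\tfrac12)=0$, and $\tfrac12\arctan(1) = \tfrac{\pi}{8}$ exactly cancel the constant $-\tfrac{\pi}{8}$, so both sides vanish. Since $\vth$ is defined to be the continuous extension from $t=0$, and the right-hand side of (\ref{eq:RS-theta-2}) is likewise a continuous function of $t \in \mathbb{R}$ when $\arg\Gamma(\tfrac12+it)$ is taken as the continuous branch with value $0$ at $t=0$, the identity extends from $t=0$ to all real $t$ by continuity.
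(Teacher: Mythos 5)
Your proof is correct and follows essentially the same route as the paper: both apply the reflection and duplication formulas at $s=\tfrac14+\tfrac{it}{2}$, exploit $\Gamma(\bar z)=\overline{\Gamma(z)}$ to turn $\Gamma(\tfrac34-\tfrac{it}{2})\Gamma(\tfrac34+\tfrac{it}{2})$ into a positive real factor, and use the identity $\arctan\frac{1-u}{1+u}=\frac{\pi}{4}-\arctan u$; the paper merely multiplies the two formulas before taking arguments, whereas you take arguments separately and eliminate $\arg\Gamma(\tfrac34+\tfrac{it}{2})$. Your explicit handling of the branch ambiguity (checking $t=0$ and extending by continuity) is a welcome detail that the paper leaves implicit.
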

\begin{proof}
The reflection formula $\Gamma(s)\Gamma(1-s) = \pi/\sin(\pi s)$
with $s = \frac{it}{2}+\frac14$ gives
\begin{equation}		\label{eq:alt1}
\Gamma\left(\textstyle\frac{it}{2}+\frac14\right)
\Gamma\left(\textstyle-\frac{it}{2}+\frac34\right) = 
\frac{\pi}{\sin \pi(\frac{it}{2}+\frac14)}\,\raisecomma
\end{equation}
and the duplication formula
$\Gamma(s)\Gamma(s+\half) = 2^{1-2s}\pi^{1/2}\Gamma(2s)$ gives
\begin{equation}		\label{eq:alt2}
\Gamma\left(\textstyle\frac{it}{2}+\frac14\right)
\Gamma\left(\textstyle\frac{it}{2}+\frac34\right) = 
2^{1/2-it}\pi^{1/2}\Gamma(it+\half).
\end{equation}
Multiplying~\eqref{eq:alt1} and~\eqref{eq:alt2} gives
\[
\Gamma(\textstyle\frac{it}{2}+\frac14)^2\,
|\Gamma(\textstyle\frac{it}{2}+\frac34)|^2 =
\displaystyle
\frac{2^{1/2-it}\pi^{3/2}\Gamma(it+\half)}
{\sin \pi\!\left(\frac{it}{2}+\frac14\right)}
\,\raisedot
\]
Taking the argument of each side and simplifying, using the fact that
\[
\arctan\left(\frac{1-e^{-\pi t}}{1+e^{-\pi t}}\right) = \frac{\pi}{4} 
	- \arctan\left(e^{-\pi t}\right),
\]
proves the lemma.
\end{proof}

Using the representation of $\vth(t)$ given in Lemma~\ref{lem:RS-theta-2},
and the results of \S\ref{sec:Gamma2}, we obtain an asymptotic approximation
of $\vth(t)$ together with error bounds.
This is summarised in
Theorems~\ref{thm:RS-theta-approx}--\ref{thm:RS-theta-approx_bds}. 
As far as we are aware, this is the first time that
a rigorous error bound applicable for all $k\ge 1$ and all real $t > 0$ has
been given. Most authors seem to restrict themselves to small $k$ and
sufficiently large~$t$.
For example, Edwards~\cite[(2) in \S6.5]{Edwards} takes
$k=2$ and $t$ ``large'';
Gabcke~\cite[Satz 4.2.3(d)]{Gabcke} takes $k=4$ and $t \ge 10$.

\pagebreak[3]

\begin{theorem}			\label{thm:RS-theta-approx}
For all
real $t > 0$, 
\begin{equation}		\label{eq:RS-theta-approx-bd}
\vth(t) = \frac{t}{2} \log\left(\frac{t}{2\pi e}\right)
 - \frac{\pi}{8} + \frac{\arctan\left(e^{-\pi t}\right)}{2}
 + \sum_{j=1}^{k-1}\Ttilde_j(t) + \Rtilde_{k}(t),
\end{equation}
where 
\begin{equation}
\Ttilde_j(t) := \half|\That_j(t)|
 = \frac{|B_{2j}(\half)|}{4j(2j-1)t^{2j-1}}
\end{equation}
and
\begin{equation}		\label{eq:Rtilde_defn}
\Rtilde_{k}(t) := \Im\left(\half\Rhat_{k}(it)\right).
\end{equation}
\end{theorem}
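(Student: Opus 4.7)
The plan is to reduce the claim to a direct computation: substitute $z = it$ into the Gauss expansion \eqref{eq:G2G2} from \S\ref{sec:Gamma2}, take imaginary parts, and feed the result into the representation of $\vartheta(t)$ given by Lemma~\ref{lem:RS-theta-2}. All the real work is in identifying $\Im\That_j(it)$ with $2\Ttilde_j(t)$ and cleaning up the leading terms.

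First, set $z = it$ (with $t > 0$) in \eqref{eq:G2G2}. Since $\Re(it+\tfrac12) = \tfrac12 > 0$, the principal branch of $\logGamma$ is analytic there, and since $\logGamma(\tfrac12) = \tfrac12\log\pi \in \R$ while the integrand in~\eqref{eq:Rhat} is continuous in $t$, the function $\Im\logGamma(it+\tfrac12)$ is continuous on $\R$ and vanishes at $t=0$. By the definition of $\vartheta$ (continuous argument with $\vartheta(0)=0$), this gives $\arg\Gamma(it+\tfrac12) = \Im\logGamma(it+\tfrac12)$. Now $\log(it) = \log t + i\pi/2$, so $it\log(it) = it\log t - \pi t/2$, and taking imaginary parts of \eqref{eq:G2G2} yields
\begin{equation*}
\arg\Gamma(it+\tfrac12) = t\log t - t + \sum_{j=1}^{k-1}\Im\That_j(it) + \Im\Rhat_k(it).
\end{equation*}

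Next, evaluate $\Im\That_j(it)$. From \eqref{eq:That}, $\That_j(it) = B_{2j}(\tfrac12)/(2j(2j-1)(it)^{2j-1})$. Since $i^{2j-1} = (-1)^{j-1}i$, we have $(it)^{1-2j} = (-1)^{j}i\,t^{1-2j}$, hence
\begin{equation*}
\Im\That_j(it) = \frac{(-1)^{j}\,B_{2j}(\tfrac12)}{2j(2j-1)\,t^{2j-1}}.
\end{equation*}
Now $B_{2j}(\tfrac12) = -(1-2^{1-2j})B_{2j}$, and since $B_{2j}$ has sign $(-1)^{j+1}$ and $1-2^{1-2j} > 0$ for $j \ge 1$, the number $B_{2j}(\tfrac12)$ has sign $(-1)^{j}$. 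Thus $(-1)^{j}B_{2j}(\tfrac12) = |B_{2j}(\tfrac12)|$, giving
\begin{equation*}
\Im\That_j(it) = \frac{|B_{2j}(\tfrac12)|}{2j(2j-1)\,t^{2j-1}} = 2\,\Ttilde_j(t).
\end{equation*}
Combined with the definition $\Rtilde_k(t) = \tfrac12\Im\Rhat_k(it)$, this gives
\begin{equation*}
\tfrac12\arg\Gamma(it+\tfrac12) = \tfrac12 t\log t - \tfrac12 t + \sum_{j=1}^{k-1}\Ttilde_j(t) + \Rtilde_k(t).
\end{equation*}

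Finally, substitute into Lemma~\ref{lem:RS-theta-2} and simplify the leading terms via
\begin{equation*}
\tfrac12 t\log t - \tfrac12 t - \tfrac12 t\log(2\pi) = \tfrac{t}{2}\log\!\left(\tfrac{t}{2\pi e}\right),
\end{equation*}
which gives exactly \eqref{eq:RS-theta-approx-bd}. The only delicate steps are the sign analysis identifying $(-1)^{j}B_{2j}(\tfrac12)$ with $|B_{2j}(\tfrac12)|$, and the branch justification for $\arg\Gamma(it+\tfrac12) = \Im\logGamma(it+\tfrac12)$; both are routine given that we stay in the half-plane $\Re(z) > 0$ where the principal branch is unambiguous.
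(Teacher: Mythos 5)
Your proposal is correct and follows essentially the same route as the paper: substitute $z=it$ into \eqref{eq:G2G2}, take imaginary parts, use the sign of $B_{2j}(\half)$ to identify $\Im\That_j(it)$ with $|\That_j(t)| = 2\Ttilde_j(t)$, and feed the result into Lemma~\ref{lem:RS-theta-2}. Your extra remark justifying $\arg\Gamma(it+\half)=\Im\logGamma(it+\half)$ by continuity is a small addition the paper leaves implicit, but it does not change the argument.
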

\begin{proof}
{From} Lemma~\ref{lem:RS-theta-2},
\[
2\vth(t) = \Im\left(\logGamma(it+\half)\right)
		- t\log(2\pi)
		- \pi/4
	 	+ \arctan\left(e^{-\pi t}\right).
\]
Using~\eqref{eq:G2G2} with $z=it$ for the $\logGamma(it+\half)$ term, 
we obtain
\begin{align*}
2\vth(t) =&\; \Im\left(it\log(it) - it 
		+ \sum_{j=1}^{k-1}\That_j(it)
		+ \Rhat_k(it)\right)
		- t\log(2\pi)\\
	  &\;\hspace*{19em}   
		- \pi/4
	 	+ \arctan\left(e^{-\pi t}\right)
\end{align*}
Since $B_{2j} = (-1)^{j-1}|B_{2j}|$ and
$B_{2j}(\half) = -(1-2^{1-2j})B_{2j}$,
we see from~\eqref{eq:That} that $\Im(\That_j(it)) = |\That_j(t)|$.
Also, $\Im(it\log i) = \Im(it \cdot i\pi/2) = 0$. Thus,
\begin{align*}
2\vth(t) =&\; t\log t - t
	 + \sum_{j=1}^{k-1}|\That_j(t)|
	 + \Im(\Rhat_{k}(it))
	   - t\log(2\pi)
		- \pi/4
	 	+ \arctan\left(e^{-\pi t}\right)\\
	=&\; t\log\left(\frac{t}{2\pi e}\right) - \displaystyle\frac{\pi}{4}
	+ \arctan\left(e^{-\pi t}\right)
	+ 2\displaystyle\sum_{j=1}^{k-1}\Ttilde_j(t) + 2\Rtilde_k(t).
\end{align*}
Thus, the result~\eqref{eq:RS-theta-approx-bd} follows.
\end{proof}

\begin{remark}			\label{remark:Edwards}
{\rm
The first few terms of the asymptotic expansion for
$\vth(t)$ are derived
in a different manner by Edwards~\cite[\S6.5]{Edwards};
his method does not easily lead to an
expression for the general term 
or to an error bound valid for all~$k$.
}
\end{remark}

\begin{lemma}
For all real $t > 0$,
\begin{equation}		\label{eq:rtilde1}
\Rtilde_1(t) = \Im\left(\int_0^\infty
	\frac{B_2(\half)-B_2(\{u+\half\})}{4(u+it)^2}\dup u\right)
\end{equation}
and 
\begin{equation}		\label{eq:Rtildekplus}
\Rtilde_{k+1}(t) 
    = \Im \left(-\int_0^\infty
      \frac{B_{2k}(\{u+\half\})}{4k(u+it)^{2k}}\dup u \right).
\end{equation}
\end{lemma}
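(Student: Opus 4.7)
The plan is to derive both identities by specialising $z=it$ in an integral representation for $\Rhat_k(z)$ and then taking imaginary parts, since by definition $\Rtilde_k(t) = \Im(\half\,\Rhat_k(it))$ from \eqref{eq:Rtilde_defn}.

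The second formula \eqref{eq:Rtildekplus} is immediate. The integral representation \eqref{eq:Rhatbd2} holds for every $z \in \Hplanestar$, and at $z = it$ with $t > 0$ the integrand is bounded in modulus by a constant multiple of $1/(u^2+t^2)^k$, so the improper integral converges absolutely. Dividing \eqref{eq:Rhatbd2} by $2$, substituting $z=it$, and taking imaginary parts directly produces \eqref{eq:Rtildekplus}.

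The first formula \eqref{eq:rtilde1} requires a little more work, because \eqref{eq:Rhatbd2} supplies an integral for $\Rhat_{k+1}$ only when $k \ge 1$, and so does not cover $\Rhat_1$. The bridge I would use is the one-step identity
\[
\Rhat_1(z) = \That_1(z) + \Rhat_2(z),
\]
which comes from comparing \eqref{eq:G2G2} at $k=1$ and $k=2$, i.e.\ from peeling off the $j=1$ term of the sum. By \eqref{eq:That}, $\That_1(z) = B_2(\half)/(2z)$. Combining this with the elementary identity
\[
\frac{1}{z} = \int_0^\infty \frac{1}{(u+z)^2}\,\dup u \qquad (z \in \Hplanestar),
\]
valid because the antiderivative $-(u+z)^{-1}$ vanishes as $u\to\infty$, together with \eqref{eq:Rhatbd2} at $k=1$, yields
\[
\Rhat_1(z) = \int_0^\infty \frac{B_2(\half) - B_2(\{u+\half\})}{2(u+z)^2}\,\dup u.
\]
Setting $z = it$, multiplying by $\half$, and taking the imaginary part gives \eqref{eq:rtilde1}.

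The only real obstacle is the $k=0$ case: \eqref{eq:Rhatbd2} does not directly give $\Rhat_1$ as a single integral, so one must absorb the algebraic correction $\That_1(z)$ into the integrand by rewriting $1/z$ as an integral. Once this is done, the two integrals merge under a common denominator and the identity falls out; no convergence issue arises because $B_2(\{u+\half\})$ remains bounded while $(u+it)^{-2}$ is absolutely integrable on $[0,\infty)$ for $t>0$.
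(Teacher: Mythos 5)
Your proof is correct and follows essentially the same route as the paper: \eqref{eq:Rtildekplus} comes directly from \eqref{eq:Rhatbd2} and the definition \eqref{eq:Rtilde_defn}, and \eqref{eq:rtilde1} is obtained by peeling off the first term via $\Rhat_1 = \That_1 + \Rhat_2$ (the paper phrases this as $\Rtilde_1(t)=\Ttilde_1(t)+\Rtilde_2(t)$). Your explicit use of $1/z = \int_0^\infty (u+z)^{-2}\,\dup u$ to absorb $\That_1(z)$ into the integrand is exactly the step the paper leaves implicit.
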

\begin{proof}
Eqn.~\eqref{eq:Rtildekplus} follows from~\eqref{eq:Rhatbd2} and the
definition~\eqref{eq:Rtilde_defn} of $\Rtilde_k(t)$.
For~\eqref{eq:rtilde1}
we use $\Rtilde_1(t) = \Ttilde_1(t) + \Rtilde_2(t)$,
where $\Rtilde_2(t)$ is given by~\eqref{eq:Rtildekplus} with $k=1$.
\end{proof}

\pagebreak[3]

\begin{theorem}			\label{thm:RS-theta-approx_bds}
If $t$ and $\Rtilde_k(t)$ are as in Theorem~$\ref{thm:RS-theta-approx}$,
then
\begin{equation}		\label{eq:RS-theta-Rk-bd}
|\Rtilde_{k+1}(t)| \le
	\frac{\pi^{1/2}\,\Gamma(k-\half)\,|B_{2k}|}{8\,k!\,t^{2k-1}}
	\,\raisedot
\end{equation}
\end{theorem}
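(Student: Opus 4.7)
The plan is to combine the integral representation \eqref{eq:Rhatbd2} for $\Rhat_{k+1}$ with the elementary observation that $|\Im(w)| \le |w|$ applied to $w = \half\Rhat_{k+1}(it)$, and then to bound the resulting real integral by the same Wallis-type computation already used in the proof of Theorem~\ref{thm:right_half_bd}.

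More precisely, first I would write
\[
 |\Rtilde_{k+1}(t)|
 = \bigl|\Im\bigl(\tfrac12\Rhat_{k+1}(it)\bigr)\bigr|
 \le \tfrac12\,|\Rhat_{k+1}(it)|,
\]
which already absorbs the $\tfrac18$ in the target bound (versus the $\tfrac12$ coming from the kernel $1/(2k)$ in~\eqref{eq:Rhatbd2}, and a further $\tfrac14$ assembled below). Next, using \eqref{eq:Rhatbd2} at $z=it$ and the triangle inequality for integrals,
\[
 |\Rhat_{k+1}(it)|
 \le \frac{1}{2k}\int_0^\infty \frac{\bigl|B_{2k}(\{u+\tfrac12\})\bigr|}{|u+it|^{2k}}\dup u.
\]
The key pointwise bound is the standard fact $|B_{2k}(x)| \le |B_{2k}|$ for all $x\in[0,1]$ (so in particular for $x = \{u+\tfrac12\}$); this is the one non-trivial ingredient I need to invoke.

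With that in hand, the integral reduces to $\int_0^\infty (u^2+t^2)^{-k}\dup u$, which by the substitution $u\mapsto t\tan\psi$ and Wallis's formula — exactly as in the proof of Theorem~\ref{thm:right_half_bd} — equals
\[
 \frac{\pi^{1/2}}{2}\,\frac{\Gamma(k-\tfrac12)}{\Gamma(k)}\,t^{1-2k}.
\]
Substituting back and using $k\,\Gamma(k) = k!$ gives
\[
 |\Rhat_{k+1}(it)|
 \le \frac{|B_{2k}|}{2k}\cdot\frac{\pi^{1/2}\,\Gamma(k-\tfrac12)}{2\,\Gamma(k)\,t^{2k-1}}
 = \frac{\pi^{1/2}\,\Gamma(k-\tfrac12)\,|B_{2k}|}{4\,k!\,t^{2k-1}},
\]
and the extra factor $\tfrac12$ from the first step then yields~\eqref{eq:RS-theta-Rk-bd}.

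The main (mild) obstacle is justifying the pointwise bound $|B_{2k}(x)| \le |B_{2k}|$ on $[0,1]$; everything else is a direct reuse of the integral estimate already carried out in~\S\ref{sec:Gamma1}. Note that this route avoids the factor $\eta_k$ that appears in Theorem~\ref{thm:right_half_bd_a}: here we bound $|B_{2k}(\{u+\tfrac12\})|$ directly by $|B_{2k}|$, rather than pulling $|\That_k(z)|$ out of the integral and paying the $\eta_k = |T_k/\That_k|$ conversion, which is why \eqref{eq:RS-theta-Rk-bd} is stated in terms of $|B_{2k}|$ rather than in terms of $|\That_k(it)|$.
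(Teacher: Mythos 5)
Your proof is correct and follows essentially the same route as the paper: the paper simply cites Theorem~\ref{thm:right_half_bd_a} (whose proof is exactly your Wallis-formula estimate of $\int_0^\infty|u+it|^{-2k}\dup u$ applied to the kernel $B_{2k}(\{u+\half\})$ from~\eqref{eq:Rhatbd2}) and then observes that the $\eta_k$ there cancels against the factor $(1-2^{1-2k})$ in~\eqref{eq:That}, which is precisely the cancellation you make explicit by bounding $|B_{2k}(\{u+\half\})|\le|B_{2k}|$ directly. The pointwise bound $|B_{2k}(x)|\le|B_{2k}|$ on $[0,1]$ is standard (and is already used implicitly in the paper's proof of Theorem~\ref{thm:right_half_bd}), so there is no gap.
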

\begin{proof}
We use Theorem~\ref{thm:right_half_bd_a} and~\eqref{eq:That}
to bound
$\Rtilde_{k+1}(t) = \half\Im(\Rhat_{k+1}(it))$.\\
(Note that the $\eta_k$ factor in Theorem~\ref{thm:right_half_bd_a} cancels
a factor in~\eqref{eq:That}.)
\end{proof}

\begin{remark}		\label{remark:nonzero_real_part}
{\rm
{From}~\eqref{eq:G2G2}, using the fact that $\Re(\That_j(it)) = 0$,
we have
\begin{align*}
\Re(\Rhat_k(it))
=&\; \Re\left(\logGamma(it+\half) - it\log(it) + it - \half\log(2\pi)\right)\\
=&\; \log|\Gamma(it+\half)| + \textstyle\frac{\pi t}{2} - \half\log(2\pi)\\
=&\; \half\log\left(\frac{\pi}{\cosh \pi t}\right) + 
	\textstyle\frac{\pi t}{2} - \half\log(2\pi)
	\;\;\text{(using \cite[(6.1.30)]{AS})}\\
=&\; -\half\log\left(1+e^{-2\pi t}\right)
= -\half e^{-2\pi t} + O(e^{-4\pi t}),
\end{align*}
so $\Re(\Rhat_k(it))$ is exponentially small, but nonzero.
Thus $|\Rtilde_k(t)| < \half|\Rhat_k(it)|$,
and it follows that the inequality~\eqref{eq:RS-theta-Rk-bd} is strict.
}
\end{remark}

\begin{corollary}		\label{cor:cor3}
If $t>0$ then
\begin{equation}		\label{eq:RS-theta-ratio}
\left|\frac{\Rtilde_{k+1}(t)}{\Ttilde_k(t)}\right| <
 \eta_k\,\sqrt{\pi k}.
\end{equation}
\end{corollary}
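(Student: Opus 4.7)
The plan is to reduce the statement directly to Corollary~\ref{cor:cor1hat} applied at the point $z = it$. From the definition $\Rtilde_{k+1}(t) = \Im\bigl(\half\Rhat_{k+1}(it)\bigr)$ we immediately get
\[
|\Rtilde_{k+1}(t)| \;\le\; \half\,|\Rhat_{k+1}(it)|.
\]
So the whole task is to relate the denominator $\Ttilde_k(t)$ to $|\That_k(it)|$, and then invoke the already-established bound on $|\Rhat_{k+1}/\That_k|$.

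First I would observe from the explicit formula $\That_k(z) = B_{2k}(\half)/(2k(2k-1)z^{2k-1})$ that $|\That_k(z)|$ depends on $z$ only through $|z|^{2k-1}$. In particular, $|\That_k(it)| = |\That_k(t)|$, and by the definition of $\Ttilde_k$ in Theorem~\ref{thm:RS-theta-approx} we have $\Ttilde_k(t) = \half|\That_k(t)| = \half|\That_k(it)|$. Combining these,
\[
\left|\frac{\Rtilde_{k+1}(t)}{\Ttilde_k(t)}\right|
 \;\le\; \frac{\half\,|\Rhat_{k+1}(it)|}{\half\,|\That_k(it)|}
 \;=\; \left|\frac{\Rhat_{k+1}(it)}{\That_k(it)}\right|.
\]

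Since $it \in \Hplanestar$, Corollary~\ref{cor:cor1hat} applies and gives the strict bound
\[
\left|\frac{\Rhat_{k+1}(it)}{\That_k(it)}\right| < \eta_k\sqrt{\pi k},
\]
which chains with the previous inequality to yield the desired~\eqref{eq:RS-theta-ratio}. There is no serious obstacle: the only thing to watch is the bookkeeping of the factors of $\half$ (present in both $\Rtilde$ and $\Ttilde$, so they cancel), and the fact that $|\That_k|$ is unchanged when $z$ is rotated to $iz$. The bound is strict because Corollary~\ref{cor:cor1hat} is strict, which is consistent with Remark~\ref{remark:nonzero_real_part}.
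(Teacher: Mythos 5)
Your proof is correct and follows exactly the paper's route: the paper's entire proof is ``This follows from Corollary~\ref{cor:cor1hat} with $z=it$,'' and your argument simply spells out the bookkeeping (the cancelling factors of $\half$, the identity $|\That_k(it)|=|\That_k(t)|$, and the bound $|\Im(w)|\le|w|$) that the paper leaves implicit.
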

\vspace*{-10pt}
\begin{proof}
This follows from Corollary~\ref{cor:cor1hat} with $z = it$.
\end{proof}
\pagebreak[3]

\begin{remark}			\label{remark:k3z1b}
{\rm
The factor $\eta_k$ in Corollary~\ref{cor:cor3} can be omitted
if $k \ge 3$ or $t \ge 1$ (see Remark~\ref{remark:k3z1}).
}
\end{remark}

\begin{corollary}		\label{cor:cor4}
If $t\ge k > 0$, then
\[
\left|\frac{\Rtilde_{k+1}(t)}{\Ttilde_k(t)}\right| <
  \eta_k\,
  \frac{(k/t)^2}{\pi^2-1}\,\raisedot
\]
\end{corollary}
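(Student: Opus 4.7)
The plan is to derive the corollary by a direct application of Theorem~\ref{thm:iterative2} at the purely imaginary point $z = it$, mirroring the way Corollary~\ref{cor:cor3} specialised Corollary~\ref{cor:cor1hat}. The hypothesis $k \le |z|$ becomes $k \le t$, which is exactly the standing assumption of the corollary, so Theorem~\ref{thm:iterative2} is applicable and yields
\[
\left|\frac{\Rhat_{k+1}(it)}{\That_k(it)}\right|
 < \eta_k\,\frac{(k/t)^2}{\pi^2-1}.
\]

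To translate this into a statement about $\Rtilde_{k+1}(t)/\Ttilde_k(t)$, I would use the two identities already available. First, by definition $\Rtilde_{k+1}(t) = \half\,\Im(\Rhat_{k+1}(it))$, so $|\Rtilde_{k+1}(t)| \le \half\,|\Rhat_{k+1}(it)|$. Second, $\Ttilde_k(t) = \half|\That_k(t)|$, and a short calculation (essentially the one already carried out in the proof of Theorem~\ref{thm:RS-theta-approx}, using $B_{2k}(\half) = -(1-2^{1-2k})B_{2k}$ and the fact that $(it)^{2k-1}$ is purely imaginary) shows that $\That_k(it)$ is purely imaginary, so $|\That_k(it)| = |\That_k(t)|$ and therefore $\Ttilde_k(t) = \half|\That_k(it)|$. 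The two factors of $\half$ cancel in the ratio, giving
\[
\left|\frac{\Rtilde_{k+1}(t)}{\Ttilde_k(t)}\right|
 \le \left|\frac{\Rhat_{k+1}(it)}{\That_k(it)}\right|,
\]
and the bound above then yields the corollary.

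There is no real obstacle: the work has already been done in Theorem~\ref{thm:iterative2}, and the only thing to verify is the identification of $\Ttilde_k(t)$ with $\half|\That_k(it)|$, which is immediate from the imaginary-axis calculation used repeatedly in \S\ref{sec:RS-theta}. Note that the strict inequality is preserved because Theorem~\ref{thm:iterative2} already gives a strict inequality; in fact, by Remark~\ref{remark:nonzero_real_part}, $\Re(\Rhat_{k+1}(it)) \ne 0$, so passing from $|\Rhat_{k+1}(it)|$ to the smaller quantity $|\Im(\Rhat_{k+1}(it))| = 2|\Rtilde_{k+1}(t)|$ only strengthens the inequality.
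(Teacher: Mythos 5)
Your proposal is correct and follows exactly the paper's route: the paper's entire proof is ``This follows from Theorem~\ref{thm:iterative2} with $z=it$,'' and your verification that $\Ttilde_k(t)=\half|\That_k(it)|$ and $|\Rtilde_{k+1}(t)|\le\half|\Rhat_{k+1}(it)|$ just makes explicit the bookkeeping the paper leaves implicit. The added observation via Remark~\ref{remark:nonzero_real_part} that the inequality is strict is a correct bonus, though not needed since Theorem~\ref{thm:iterative2} already gives strictness.
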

\begin{proof}
This follows from Theorem~\ref{thm:iterative2} with $z = it$.
\end{proof}
\begin{remark}			\label{remark:k3z1c}
{\rm
The factor $\eta_k$ in Corollary~\ref{cor:cor4} can be omitted
if $k \ge 3$. This follows for sufficiently large $k$ from a slight
modification of the proof of Theorem~\ref{thm:iterative2},
and for small $k$ from the observation
that $\eta_k c_k < 1/(\pi^2-1)$ for $k \ge 3$
(see Remark~\ref{remark:eta_ck} and Table~{\Tableck}).
If $1\le k \le 2$ we can use the bound
$\eta_k c_k (k/t)^2$ that follows from Remark~\ref{remark:eta_ck}.
}
\end{remark}

In the literature, the asymptotic
approximation~\eqref{eq:RS-theta-approx-bd} always seems to be stated
without the exponentially-small arctan term.  See, for example,
Edwards~\cite[(1) on pg.~120]{Edwards},
Gabcke~\cite[Satz~4.2.3(c)]{Gabcke}, 
and Lehmer~\cite[(5) on pg.~104]{Lehmer2}.
The arctan term appears in some related formulas,
such as Gram~\cite[(7) on pg.~300]{Gram}
and Gabcke~\cite[Satz~4.2.3(a)]{Gabcke}.
See also the discussion in Berry~\cite[\S4]{Berry95}.

It is valid to omit the arctan term
if all we want is an asymptotic
series in the sense of Poincar\'e (see Olver~\cite[\S1.7.3]{Olver}).
However, it is not desirable if we want to minimise the error
in the approximation. If we omit the arctan term, then the upper bounds
on $|\Rtilde_k(t)|$ have to be increased accordingly.
Since $\arctan(e^{-\pi t}) < e^{-\pi t}$ for $t \ge 0$,
it is sufficient to add $\half e^{-\pi t}$ to the bound
on $|\Rtilde_{k+1}(t)|$ 
in~\eqref{eq:RS-theta-Rk-bd}.
The bound of Corollary~\ref{cor:cor3} can be replaced by
\begin{equation}		\label{eq:bd_no_arctan}
|\Rtilde_{k+1}(t)| < \eta_k\,\sqrt{\pi k}\,\Ttilde_k(t) + 
	\half e^{-\pi t}.
\end{equation}

Of course, $\half e^{-\pi t}$
is negligible if $t$ is large, e.g.~when searching for high zeros
of $\zeta(s)$ on the critical line. When $t$ is not so large, the arctan
term may be significant. We discuss this in the next section.

\begin{remark}			\label{remark:Stokes}
{\rm
Other situations where an exponentially small contribution is
significant are mentioned by Watson~\cite[\S\S7.22--7.23]{Watson},
in connection with the Stokes phenomenon \cite{Meyer,Olde-Daalhuis}
and the asymptotic expansions
of the Bessel functions $J_\nu(z)$ and  $I_\nu(z)$.
An example that is similar to ours, but somewhat simpler, was given by
Olver~\cite{Olver64}, and is discussed by
Meyer~\cite[Appendix]{Meyer}.
}
\end{remark}

\section{Attainable accuracy}		\label{sec:accuracy}

In this section we consider the accuracy of
the asymptotic expansion of 
$\vth(t)$
if $t$ is fixed and we choose (close to) the optimal number of terms
to sum.

\comment{
MOVE NEXT TWO PARAGRAPHS

An interesting question is ``for given $t\ge 1$, what is the minimum error
that can be guaranteed when using the asymptotic expansion of
Theorem~\ref{thm:RS-theta-approx} to approximate $\vth(t)$?''.

Suppose we use the first $k$ terms in the sum $\sum_{j}\Ttilde_j(t)$,
so the error is $\Rtilde_{k+1}(t)$. 
If we bound $\Rtilde_{k+1}(t)$ using~\eqref{eq:RS-theta-ratio},
i.e.~using $|\Rtilde_{k+1}(t)| < (\pi k)^{1/2}\,\Ttilde_k(t)$, then
we should choose $k$ to minimise $(\pi k)^{1/2}\,\Ttilde_k(t)$.
However, this choice is artificial, since it depends on 
\emph{our bound on the error} rather than on the \emph{true error}.
Numerical evidence indicates that the true error is closer to
$\Ttilde_k(t)$ than to $(\pi k)^{1/2}\,\Ttilde_k(t)$ if
$k$ is chosen to minimise either $(\pi k)^{1/2}\,\Ttilde_k(t)$
or $\Ttilde_k(t)$ (there is little difference between these
two choices since
$k^{1/2}$ varies relatively slowly near the respective minima).
Thus, for the sake of simplicity, we choose $k\in\Nstar$ to minimise
$\Ttilde_k(t)$. Denote this value of $k$ by $k_{\rm min}$.
} 

Assume that $t$ is fixed and positive.
The terms $\Ttilde_k(t)$ initially decrease (unless
$t \le \sqrt{7/120} \approx 0.2415$),
but eventually increase in
value, so it is of interest to determine the index of a minimal term.
Define
\[k_{\rm min} = k_{\rm min}(t) := \min \{k\ge 1: \Ttilde_k(t) \le \Ttilde_{k+1}(t)\}\]
and
\[\Ttilde_{\rm min}(t) := \Ttilde_{k_{\rm min}}(t).\]
Lemma~\ref{lemma:unimodal} shows that, for all $t > 0$, 
the sequence of terms
$(\Ttilde_k(t))_{k\ge 1}$ is unimodal, and that
$\Ttilde_{\rm min}(t)$ is a minimal term.

\begin{lemma}		\label{lemma:unimodal}
Fix $t > 0$. Then\\
$(1)$ for $1 \le k < k_{\rm min}(t)$, $\Ttilde_k(t) > \Ttilde_{k+1}(t) > 0$;\\
$(2)$ for $k = k_{\rm min}(t)$, $0 < \Ttilde_k(t) \le \Ttilde_{k+1}(t)$;\\
$(3)$ for $k > k_{\rm min}(t)$, $0 < \Ttilde_k(t) < \Ttilde_{k+1}(t)$;\\
$(4)$ $\Ttilde_{\rm min}(t) = \min_{k\ge 1} \Ttilde_k(t)$.
\end{lemma}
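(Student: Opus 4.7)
The plan is to derive all four conclusions from the single claim that, for fixed $t>0$, the ratio $R_k(t) := \Ttilde_{k+1}(t)/\Ttilde_k(t)$ is strictly increasing in $k\in\Nstar$. Given this, positivity of $\Ttilde_k(t)$ is immediate from the formula in Theorem~\ref{thm:RS-theta-approx}, and the rest is bookkeeping: for $k < k_{\min}$ the definition of $k_{\min}$ gives $R_k(t) < 1$, yielding $(1)$; part $(2)$ is the definition; for $k > k_{\min}$ strict monotonicity gives $R_k(t) > R_{k_{\min}}(t) \ge 1$, yielding $(3)$; and $(4)$ follows from unimodality.

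To establish strict monotonicity of $R_k(t)$, I would use $|B_{2k}(\half)| = (1-2^{1-2k})|B_{2k}|$ and $|B_{2k}| = 2(2k)!\,\zeta(2k)/(2\pi)^{2k}$ to rewrite
\[
R_k(t) \;=\; \frac{A_k \, B_k \, C_k}{2\pi^2 \, t^2}\,,
\]
where $A_k := k(2k-1)$, $B_k := \zeta(2k+2)/\zeta(2k)$, and $C_k := (1-2^{-1-2k})/(1-2^{1-2k})$. Here $A_k$ is strictly increasing, $C_k$ is strictly decreasing toward~$1$, and $B_k$ is strictly increasing by log-convexity of $\zeta$ at even integers---which comes from Cauchy--Schwarz applied to $\zeta(s)=\sum n^{-s}$ and yields $\zeta(2k)\zeta(2k+4) > \zeta(2k+2)^2$. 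Because $B_{k+1}/B_k > 1$, it suffices to prove the purely algebraic inequality $A_{k+1}/A_k > C_k/C_{k+1}$ for every $k \ge 1$.

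The main obstacle is this algebraic inequality. After clearing denominators and substituting $a := 2^{-2k} \in (0,1/4]$, it reduces to
\[
(4k+1)(8 + 2a^2) \;>\; a\,(18k^2 + 59k + 17).
\]
For $k \in \{1,2,3,4\}$ one verifies this directly by plugging in the exact value of $a$. For $k \ge 5$, a routine induction gives $4^k \ge 18k^2+59k+17$, so the right-hand side is bounded above by $1$, whereas the left-hand side is at least $8(4k+1) \ge 168$; the inequality is therefore trivial. This two-range argument completes the proof of strict monotonicity of $R_k(t)$ and hence of all four parts of the lemma.
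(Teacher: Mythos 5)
Your proof is correct, and its skeleton---deduce unimodality from strict monotonicity in $k$ of the consecutive-term ratio $\Ttilde_{k+1}(t)/\Ttilde_k(t)$, which is independent of $t$ apart from the factor $t^{-2}$---is exactly the reduction the paper makes (the paper phrases it via the double ratio $R(k)$ lying in $(0,1)$, which is equivalent). Where you genuinely diverge is in how that monotonicity is established. The paper's argument is explicitly only a sketch: it quotes the asymptotic form $R(k)=\frac{k(2k-1)}{(k+1)(2k+1)}\left(1+O(4^{-k})\right)$, which settles large $k$, and then appeals to ``a numerical computation for small $k$.'' You instead factor the ratio as $A_kB_kC_k/(2\pi^2t^2)$ and treat each factor analytically: $B_k=\zeta(2k+2)/\zeta(2k)$ increases by Cauchy--Schwarz (log-convexity of $\zeta$ on even integers, with strictness because the sequences $(n^{-k})$ and $(n^{-k-2})$ are not proportional), and the competition between the increasing $A_k$ and the decreasing $C_k$ reduces, after clearing the (positive) denominators, to $(4k+1)(8+2a^2)>a(18k^2+59k+17)$ with $a=2^{-2k}$. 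I checked this reduction, the sign condition $1-2a>0$ needed for the cross-multiplication, the induction $4^k\ge 18k^2+59k+17$ for $k\ge 5$, and the four base cases; all are sound, and the bookkeeping deriving $(1)$--$(4)$ from monotonicity is routine. The payoff of your route is a fully self-contained, computation-free proof of a statement the paper leaves partly to numerics; the cost is a page of elementary but fiddly algebra. One small point worth a sentence in a polished version: your factorization also shows the ratio tends to infinity with $k$ (since $A_k\to\infty$ while $B_k,C_k\to 1$), which is what guarantees that the set defining $k_{\rm min}(t)$ is nonempty, so that $k_{\rm min}(t)$ is well defined in the first place.
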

\begin{proof}[Proof (sketch)]
We observe that, for all $k \in \Nstar$,
\[R(k) := \displaystyle\frac{\Ttilde_{k+1}(t)/\Ttilde_{k+2}(t)}
      {\Ttilde_{k}(t)/\Ttilde_{k+1}(t)}\]
is independent of $t$, and can be shown to lie in the interval
$(0,1)$.  (This is clear for large $k$, since
\begin{equation*}
R(k) = \frac{k(2k-1)}{(k+1)(2k+1)}\left(1+O(4^{-k})\right),
\end{equation*}
and can be verified by a 
numerical computation for small~$k$.)
Thus
\[
\frac{\Ttilde_{k+1}(t)}{\Ttilde_{k+2}(t)} <
\frac{\Ttilde_{k}(t)}{\Ttilde_{k+1}(t)}\,\raisedot
\]
The inequalities $(1)$--$(3)$ of the lemma now follow easily,
and the equality $(4)$ follows from $(1)$--$(3)$.
\end{proof}

\begin{lemma}		\label{lemma:kmin}
For large positive $t\in\R$,
\[
k_{\rm min}(t) = \pi t + O(1)\]
and, if $k = \pi t + O(1)$, then
\[
\Ttilde_k(t) =
 \frac{e^{-2\pi t}}{2\pi\sqrt{t}}\left(1+O\left(\frac{1}{t}\right)\right).
\]
\end{lemma}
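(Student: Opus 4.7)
The plan is to reduce both parts of the lemma to one computation: the ratio of consecutive terms. Using $|B_{2k}|=2(2k)!\,\zeta(2k)/(2\pi)^{2k}$ and $|B_{2k}(\tfrac12)|=(1-2^{1-2k})|B_{2k}|$ gives
\[
\Ttilde_k(t) = \frac{(1-2^{1-2k})\,\zeta(2k)\,(2k-2)!}{(2\pi)^{2k}\,t^{2k-1}},
\]
and a short calculation, using $|B_{2k+2}|/|B_{2k}|=(2k+2)(2k+1)\zeta(2k+2)/((2\pi)^2\zeta(2k))$ together with the analogous identity for $B_{2k}(\tfrac12)$, yields
\[
\frac{\Ttilde_{k+1}(t)}{\Ttilde_k(t)} = \frac{2k(2k-1)}{(2\pi t)^2}\,\bigl(1+O(4^{-k})\bigr).
\]

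For the first assertion, I would observe that by Lemma~\ref{lemma:unimodal} the ratios $\Ttilde_{k+1}(t)/\Ttilde_k(t)$ are strictly increasing in $k$ (this is the statement $R(k-1)<1$), so $k_{\rm min}(t)$ is the unique index at which this ratio first reaches $1$. The positive real root of $2k(2k-1) = (2\pi t)^2$ is
\[
k_* = \tfrac14 + \tfrac14\sqrt{1+16\pi^2 t^2} = \pi t + \tfrac14 + O(1/t),
\]
and the exponentially small $(1+O(4^{-k}))$ factor perturbs the crossover index by at most $O(1)$ once $t$ is large, giving $k_{\rm min}(t) = \pi t + O(1)$.

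For the asymptotic value, fix $k=\pi t + O(1)$ and write $2k = 2\pi t+\delta$ with $\delta = O(1)$. Applying Stirling in the form $(2k-2)!\sim\sqrt{2\pi(2k-2)}\,((2k-2)/e)^{2k-2}$, and expanding $\ln(2k-2) = \ln(2\pi t) + (\delta-2)/(2\pi t) + O(1/t^2)$, one finds $(2k-2)^{2k-2}e^{-(2k-2)} = (2\pi t)^{2k-2}\,e^{-2\pi t}\,(1+O(1/t))$. The powers of $2\pi t$ then cancel cleanly against $(2\pi)^{2k}t^{2k-1}$, leaving $e^{-2\pi t}/(2\pi\sqrt{t})$ as the leading prefactor; the remaining factor $(1-2^{1-2k})\zeta(2k) = 1+O(4^{-k})$ is absorbed into $1+O(1/t)$. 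The main obstacle is bookkeeping in this last step: one must verify that the $(2\pi t)^\delta$ generated by the Stirling expansion is exactly matched by $(2\pi)^\delta t^\delta$ hidden inside $(2\pi)^{2k}t^{2k-1}$, so that the $\delta$ dependence cancels to leading order and the final answer is genuinely independent of the particular representative of $\pi t + O(1)$.
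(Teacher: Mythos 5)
Your proposal is correct and follows essentially the same route as the paper: compute the ratio of consecutive terms to locate $k_{\rm min}$ near $\pi t + \tfrac14$, then apply Stirling's formula to the factorial in $\Ttilde_k(t)$ (the paper parametrises $k = \pi t/(1+\varepsilon)$ and uses $(2k)!$ rather than your $2k = 2\pi t + \delta$ with $(2k-2)!$, but the bookkeeping is identical and your cancellation of the $\delta$-dependence checks out). Incidentally, your orientation of the ratio, $\Ttilde_{k+1}(t)/\Ttilde_k(t) = \frac{2k(2k-1)}{(2\pi t)^2}\bigl(1+O(4^{-k})\bigr)$, is the correct one; the left-hand side of~\eqref{eq:Ttilde_ratio} in the paper is written upside down.
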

\begin{proof}[Proof (sketch)]
{From} $|B_{2k}| = 2(2k)!\,\zeta(2k)/(2\pi)^{2k}$ we obtain
\begin{equation}		\label{eq:Ttilde_ratio}
\frac{\Ttilde_k(t)}{\Ttilde_{k+1}(t)} = 
 \frac{2k(2k-1)}{4\pi^2 t^2}\left(1+O(4^{-k})\right).
\end{equation}
Thus, $k_{\rm min} = \pi t + O(1)$, where the $O(1)$ term covers the
$1+O(4^{-k})$ factor and the effect of rounding to the nearest integer.

The estimate of $\Ttilde_{k}(t)$ follows from Stirling's approximation.
Write\linebreak
$k = \pi t/(1+\varepsilon)$, so $\varepsilon = O(1/t)$.
Then
\begin{align*}
\Ttilde_k(t) =&\; \frac{(1-2^{1-2k})\,\zeta(2k)\,(2k)!}
	{2k(2k-1)\,(2\pi)^{2k}\,t^{2k-1}}\\
	=&\; \frac{t}{4k^2} \left(\frac{2k}{e}\right)^{2k}
	\!\!\!\frac{\sqrt{4k\pi}}{(k(1+\varepsilon))^{2k}}\,
	(1+O(\varepsilon))\\
	=&\; \frac{e^{-2k-2k\varepsilon}}{2\pi \sqrt{t}}\,(1+O(\varepsilon))\\
	=&\; \frac{e^{-2\pi t}}{2\pi \sqrt{t}}\,(1+O(\varepsilon)).
\end{align*}
\end{proof}
\begin{remark}
{\rm
If we minimise $({\pi k})^{1/2}\,\Ttilde_k(t)$ instead of $\Ttilde_k(t)$,
the minimum is still at $k = \pi t + O(1)$. The difference between the indices
of the two minima can be subsumed by the $O(1)$ term.
}
\end{remark}
\begin{corollary}		\label{cor:Rkplusbd}
If $k = \pi t + O(1)$, then
$|\Rtilde_{k+1}(t)| < \half e^{-2\pi t}(1+O(1/t))$.
\end{corollary}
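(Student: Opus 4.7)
The plan is to combine the ratio bound from Corollary~\ref{cor:cor3} with the asymptotic estimate of $\Ttilde_k(t)$ from Lemma~\ref{lemma:kmin}, and then verify that the scaling works out so that the $\sqrt{\pi k}$ factor exactly cancels the $1/\sqrt{t}$ factor in $\Ttilde_k(t)$, leaving the clean constant $\tfrac12$ in front of $e^{-2\pi t}$.

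More precisely, I would start from
\[
|\Rtilde_{k+1}(t)| < \eta_k\,\sqrt{\pi k}\;\Ttilde_k(t),
\]
which is Corollary~\ref{cor:cor3}. Since $k = \pi t + O(1)$, we have $k \ge 3$ for all sufficiently large $t$, so $\eta_k = 1/(1-2^{1-2k}) = 1 + O(4^{-k}) = 1 + O(e^{-c\, t})$ for some $c>0$; this factor is negligible compared with the $1+O(1/t)$ error we are tracking. Next, with $k = \pi t + O(1)$,
\[
\sqrt{\pi k} = \sqrt{\pi(\pi t + O(1))} = \pi\sqrt{t}\,\bigl(1 + O(1/t)\bigr).
\]
Substituting the estimate
\(
\Ttilde_k(t) = \dfrac{e^{-2\pi t}}{2\pi\sqrt{t}}\bigl(1+O(1/t)\bigr)
\)
from Lemma~\ref{lemma:kmin} gives
\[
|\Rtilde_{k+1}(t)| < \bigl(1 + O(e^{-ct})\bigr)\cdot\pi\sqrt{t}\cdot
\frac{e^{-2\pi t}}{2\pi\sqrt{t}}\bigl(1+O(1/t)\bigr)
= \tfrac12\,e^{-2\pi t}\bigl(1+O(1/t)\bigr),
\]
which is the claimed bound.

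There is no serious obstacle here; the work is essentially bookkeeping to check that every multiplicative error term is of size $1+O(1/t)$ or smaller. The one point that needs a moment's care is confirming that $\eta_k$ is harmless when $k = \pi t + O(1)$ is allowed to include small values (for small~$t$), but this is covered by the $O(1)$ slack hidden inside the ``$k = \pi t + O(1)$'' hypothesis, which is asymptotic in~$t$. Thus the corollary follows immediately from Corollary~\ref{cor:cor3} and Lemma~\ref{lemma:kmin}, with the pleasing cancellation $\sqrt{\pi k}\cdot(1/\sqrt{t}) \to \pi$ producing the factor $\tfrac12$.
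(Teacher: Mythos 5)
Your proof is correct and follows exactly the same route as the paper, which simply cites the ratio bound \eqref{eq:RS-theta-ratio} together with the second half of Lemma~\ref{lemma:kmin}; you have merely written out the bookkeeping (the cancellation $\sqrt{\pi k}\cdot(2\pi\sqrt{t})^{-1}\to\frac12$ and the negligibility of $\eta_k$) that the paper leaves implicit.
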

\begin{proof}
The result follows from~\eqref{eq:RS-theta-ratio} and the second
half of Lemma~\ref{lemma:kmin}.
\end{proof}

{From} Lemma~\ref{lemma:kmin} and Corollary~\ref{cor:Rkplusbd},
we can guarantee an error not exceeding $\half e^{-2\pi t}(1+O(1/t))$
by taking $k_{\rm min}(t) = \pi t + O(1)$ terms in the approximation
\begin{equation}		\label{eq:RS-theta-approx-bdagain}
\vth(t) \approx \frac{t}{2} \log\left(\frac{t}{2\pi e}\right)
 - \frac{\pi}{8} + \frac{\arctan\left(e^{-\pi t}\right)}{2}
 + \sum_{j=1}^{k_{\rm min}(t)}\Ttilde_j(t).
\end{equation}
On the other hand, if we use the ``standard''
approximation
\begin{equation}		\label{eq:RS-theta-approx-bdagain2}
\vth(t) \approx \frac{t}{2} \log\left(\frac{t}{2\pi e}\right)
 - \frac{\pi}{8} + \sum_{j=1}^{k_{\rm min}(t)}\Ttilde_j(t),
\end{equation}
we can only guarantee an error not exceeding
$\half e^{-\pi t} + O(e^{-2\pi t})$.
Thus, the $\arctan$ term is numerically significant, even though
it is asymptotically smaller than any term $\Ttilde_j(t)$.
This is illustrated by
Table \Tableabcd, where we give, for various $t\in[1,100]$,
$k_{\rm min}(t)$ and\\[-20pt]
\begin{itemize}
\item[$A:$] the error in the standard approximation
\eqref{eq:RS-theta-approx-bdagain2} after taking $k_{\rm min}(t)$ terms,
normalised by the smallest term 
$\Ttilde_{\rm min}(t) \approx e^{-2\pi t}/(2\pi t^{1/2})$;\\[-20pt]
\item[$B:$] the error bound of \eqref{eq:RS-theta-ratio}
	(this is already normalised)
	;\\[-20pt]
\item[$C:$] the error in the approximation
\eqref{eq:RS-theta-approx-bdagain},
normalised by the smallest term,
i.e.\ $\Rtilde_{k+1}(t)/\Ttilde_k(t)$ for $k = k_{\rm min}(t)$;\\[-20pt]
\item[$D:$] the error in the empirically improved approximation
\begin{align}
\vth(t) \approx&\; \frac{t}{2} \log\left(\frac{t}{2\pi e}\right)
 - \frac{\pi}{8} + \frac{\arctan\left(e^{-\pi t}\right)}{2}
	\nonumber\\
 &\; + \sum_{j=1}^{k_{\rm min}(t)}\Ttilde_j(t)
 + \left(\pi t - k_{\rm min}(t) + \frac{1}{12}\right)\Ttilde_{\rm min}(t),
	\label{eq:empirical}
\end{align}
normalised by $\Ttilde_{\rm min}(t)$, as for columns $A$ and $C$. 
\end{itemize}

It can be seen that $k_{\rm min}(t)$ is usually
$\lfloor \pi t + 5/4 \rfloor$.
This is as expected from~\eqref{eq:Ttilde_ratio}.
The normalised value $A$ is 
approximately $\pi t^{1/2}\exp(\pi t)$, which is large
because $\Ttilde_{\rm min}(t)$, given by Lemma~\ref{lemma:kmin},
is much smaller than the error, which is about $\half\exp(-\pi t)$. 

Column $B$ gives upper bounds on the absolute values
of the entries in column $C$~--
it is clear that the upper bounds are conservative
(although necessarily so, by the discussion near the end of
\S\ref{sec:Gamma1}).

It can be observed that the entries in column $C$ are negative.
This suggests that we would be better off truncating
the sum after $k_{\rm min}-1$ terms instead of $k_{\rm min}$ terms (which would
have the effect of adding $1$ to the entries in column $C$). However, a
much better approximation is obtained by adding a ``correction term''
\begin{equation*}		
\left(\pi t - k_{\rm min}(t) + \frac{1}{12}\right)\Ttilde_{\rm min}(t)
\end{equation*}
as in~\eqref{eq:empirical}. 
The motivation for the correction term is to smooth out the sawtooth nature
of approximation $C$, which has jumps at the values of $t$ where
$k_{\rm min}(t)$ changes.  This explains the addition of
$(\pi t - k_{\rm min}(t)+c)\,\Ttilde_{\rm min}(t)$, where $c$ is an arbitrary
constant. 
Column $D$ gives numerical evidence
for a constant close to $\frac{1}{12}$.
We do not have a theoretical
explanation for the value of this constant,
although it is clearly related to the asymptotic location
of the positive zero(s) of
the function $\Rtilde_{k+1}(t)$ given
by~\eqref{eq:Rtildekplus}.
It may be relevant that, for large $k$,
$B_{2k}(u+\half)$ behaves like a scaled version
of $\cos(2\pi u)$: see Dilcher~\cite[Theorem~1]{Dilcher}.

\begin{table}[ht]
\begin{center}
\begin{tabular}{cccccc}
$t$  & $k_{\rm min}$ & $A$ 	       & $B$ 	& $C$ 	  & $D$ \\
\hline
$1$  & $4$	 & $7.2\tm{1}$ & $3.57$ & $-0.79$ & $-1.1\tm{-2}$\\
$2$  & $7$	 & $2.4\tm{3}$ & $4.69$ & $-0.63$ & $+2.4\tm{-4}$\\
$5$  & $16$	 & $4.6\tm{7}$ & $7.09$ & $-0.21$ & $+2.8\tm{-3}$\\
$10$ & $32$	 & $4.4\tm{14}$& $10.0$ & $-0.50$ & $+8.3\tm{-4}$\\
$20$ & $64$	 & $2.7\tm{28}$& $14.2$ & $-1.08$ & $+8.3\tm{-5}$\\
$50$ & $158$	 & $3.7\tm{69}$& $22.3$ & $-0.84$ & $-1.5\tm{-4}$\\
$100$&$315$	 & $8.6\tm{137}$&$31.5$ & $-0.76$ & $-5.2\tm{-5}$\\
\hline
\end{tabular}
\caption{Normalised errors -- see text for $A, B, C,  D$.}
\end{center}
\vspace*{-10pt}
\end{table}

\subsection*{Acknowledgement}

The author was supported in part by Australian Research Council grant
DP140101417.

\pagebreak[3]


\begin{thebibliography}{99}	

\bibitem{AS}
M. Abramowitz and I. A. Stegun, 
\emph{Handbook of Mathematical Functions},
Dover, New York, 1965.
Online version at \url{http://people.math.sfu.ca/~cbm/aands/}.

\bibitem{AR}
R. A. Askey and R. Roy,
\emph{Gamma Function},
Chapter 5 in the \emph{NIST Digital Library of Mathematical Functions},
\url{http://dlmf.nist.gov/}, as at 2016-08-08.

\bibitem{BS}
H. Behnke and F. Sommer,
\emph{Theorie der analytischen Funktionen einer komplexen Ver\"anderlichen},
2nd ed., Springer-Verlag, Berlin, 1962.

\bibitem{Berry95}
M. V. Berry,
The Riemann-Siegel expansion for the zeta function: high orders and
remainders,
\emph{Proc.\ R.\ Soc.\ Lond.\ A} \textbf{450} (1995), 439--462.


\bibitem{rpb047}
R. P. Brent,
On the zeros of the Riemann zeta function in the critical strip,
\emph{Math.\ Comp.\ }\textbf{33} (1979), 1361--1372.

\bibitem{rpb267}
R. P. Brent,
\emph{Asymptotic approximation of central binomial coefficients with rigorous
error bounds}, arXiv:1608.04834v1, 17 Aug.\ 2016.

\bibitem{rpb070}
R. P. Brent, J. van de Lune, H. J. J. te Riele and D. T. Winter,
On the zeros of the Riemann zeta function in the critical strip, II,
\emph{Math.\ Comp.\ }\textbf{39} (1982), 681--688.

\bibitem{CR}
F. D. Crary and J. Barkley Rosser,
High precision coefficients related to the zeta function,
reviewed by R.~P.~Brent in \emph{Math.\ Comp.\ }\textbf{31} (1977),
803--804.

\bibitem{Dilcher}
K. Dilcher,
Asymptotic behaviour of Bernoulli, Euler, and generalized Bernoulli
polynomials,
\emph{J.~Approximation Theory} \textbf{49} (1987), 321--330.

\bibitem{Edwards}
H. M. Edwards,
\emph{Riemann's Zeta Function},
Academic Press, New York, 1974;
reprinted by Dover Publications, 2001.

\bibitem{Gabcke}
W. Gabcke,
\emph{Neue Herleitung und Explizite Restabsch\"atzung der
Riemann-Siegel-Formel}, Ph.D.\ thesis, G\"ottingen, 1979.
Online version revised 2015, available from
\url{http://ediss.uni-goettingen.de/}.

\bibitem{Gauss-v3}
C. F. Gauss, \emph{Disquisitiones generales circa seriem infinitam\\
$1 + \frac{\alpha\delta}{1\cdot\gamma}\,x
   + \frac{\alpha(\alpha+1)\delta(\delta+1)}
	{1\;\cdot\; 2\;\cdot\; \gamma(\gamma+1)}\,xx
   + \frac{\alpha(\alpha+1)(\alpha+2)\delta(\delta+1)(\delta+2)}
          {\;1\;\;\cdot\;\; 2\;\;\cdot\;\; 3\;\;\cdot\;\; 
	\gamma(\gamma+1)(\gamma+2)}\,x^3
   + \cdots$, 
etc.}, 
Comm.\ Soc.\ Reg.\ Sci.\ G\"ottingensis Rec.\ \textbf{2} (1813);
reprinted in \emph{Carl Friedrich Gauss Werke}, Bd.~3,
G\"ottingen, 1876, 123--162 (see esp.\ pg.\ 152).
Available online at \url{https://archive.org/details/werkecarlf03gausrich}.

\bibitem{Gram}
J.-P. Gram,
Note sur les z\'eros de la fonction $\zeta(s)$ de Riemann,
\emph{Acta Mathematica} \textbf{27} (1908), 289--304.

\bibitem{Hare}
D. E. G. Hare,
Computing the principal branch of log-Gamma,
\emph{J.\ of Algorithms} \textbf{25} (1997), 221--236.


\bibitem{Hermite}
M. Ch. Hermite,
Sur la fonction $\log \Gamma(a)$,
\emph{J.~Reine Angew.\ Math.\ }\textbf{115} (1895), 201--208.




\bibitem{Lehmer2}
D. H. Lehmer,
Extended computation of the Riemann zeta function,
\emph{Mathematika} \textbf{3} (1956), 102--108.

\bibitem{Meyer}
R. E. Meyer,
A simple explanation of the Stokes phenomenon,
\emph{SIAM Review} \textbf{31} (1989), 435--445.

\bibitem{Nemes13}
G. Nemes,
Generalization of Binet's Gamma function formulas,
\emph{Integral Transforms and Special Functions} \textbf{24} (2013), 597--606.
\url{http://dx.doi.org/10.1080/10652469.2012.725168}

\bibitem{Olde-Daalhuis}
A. B. Olde Daalhuis, S. J. Chapman, J. R. King, J. R. Ockendon
and R. H. Tew, Stokes phenomenon and matched asymptotic expansions,
\emph{SIAM J.~Appl.~Math.\ }\textbf{55} (1995), 1469--1483.

\bibitem{Olver64}
F. W. J. Olver, 
Error bounds for asymptotic expansions, with an application to cylinder
functions of large argument,
in \emph{Asymptotic Solutions of Differential Equations and their 
Applications}, C. H. Wilcox, ed., John Wiley, New York, 1964, pp.~163--183.

\bibitem{Olver}
F. W. J. Olver,
\emph{Asymptotics and Special Functions},
Academic Press, New York, 1974.

\bibitem{PS-v1}
G. P\'olya and G. Szeg\"o,
\emph{Problems and Theorems in Analysis I},
Springer Classics in Mathematics, 1972 (D.\ Aeppli, translator).
\url{https://archive.org/details/springer_10.1007-978-3-642-61983-0}

\bibitem{Spira}
R. Spira,
Calculation of the Gamma function by Stirling's formula,
\emph{Math.\ Comp.\ }\textbf{25} (1971), 317--322.



\bibitem{Watson}
G. N. Watson,
\emph{A Treatise on the Theory of Bessel Functions},
second ed., Cambridge Univ.\ Press, 1941.

\bibitem{WW}
E. T. Whittaker and G. N. Watson,
\emph{A Course of Modern Analysis},
3rd ed., Cambridge Univ.\ Press, 1920.
Available online from \url{http://archive.org/details/cu31924001549660}.

\end{thebibliography}
\end{document}